 \tikzset{vertex/.style={fill,circle,inner sep=1.0pt}}
\newtheorem{theorem}{Theorem}[section]
\newtheorem{proposition}[theorem]{Proposition}
\newtheorem{lemma}[theorem]{Lemma}
\theoremstyle{definition}
\newtheorem{definition}[theorem]{Definition}
\newtheorem{remark}[theorem]{Remark}
\newtheorem{example}[theorem]{Example}
\newcommand{\lex}[2]{{#1}[{#2}]}
\newcommand{\neib}[2]{\overline{N}_{#1} (#2)}
\newcommand{\sphere}[1]{S^{#1}}
\newcommand{\dl}[2]{\mathrm{dl}_{#1}(#2)}
\newcommand{\st}[2]{\mathrm{st}_{#1}(#2)}
\newcommand{\lk}[2]{\mathrm{lk}_{#1}(#2)}
\newcommand{\dlK}[1]{\mathrm{dl}_K (#1)}
\newcommand{\stK}[1]{\mathrm{st}_K (#1)}
\newcommand{\lkK}[1]{\mathrm{lk}_K (#1)}
\newcommand{\poly}{\mathcal{Z}^*_M (\underline{K}, \underline{L})}
\newcommand{\pair}{(\underline{K}, \underline{L})}
\newcommand{\polyX}[1]{\mathcal{Z}^*_{#1} (\underline{X})}
\newcommand{\polyIX}[1]{\mathcal{Z}^*_{I(L_{#1})}(X)}
\newcommand{\card}[1]{\# \left( #1 \right)}
\title[]{On the Homotopy Type of the Polyhedral Join over the Independence Complex of a Forest}
\author[]{Kengo Okura}
\address{Osaka Metropolitan University, 1-1 Gakuen-cho, Naka-ku, Sakai, Osaka 599-8531, Japan}
\email{okura.kengo.k35@kyoto-u.jp}
\keywords{polyhedral join, homotopy type, independence complex, lexicographic product}
\subjclass[2020]{05E45, 05C69, 05C76}
\begin{document}
\begin{abstract}
We consider a certain class of simplicial complexes which includes the independence complexes of forests. 
We show that if a simplicial complex $K$ belongs to this class, then the polyhedral join $\mathcal{Z}^*_{K}(\underline{X}, \emptyset)$ is homotopy equivalent to a wedge sum of CW complexes of the form $\Sigma^r X_{i_1} * X_{i_2} * \cdots * X_{i_k}$, where $\underline{X}$ is a family $\{X_i\}_{i \in V(K)}$ of CW complexes and $\Sigma$ denotes the unreduced suspension. 
This result is applied to study the homotopy type of the independence complex of the lexicographic product $G[H]$ of a graph $H$ over a forest $G$. 
We denote by $L_m$ a tree on $m$ vertices with no branches. 
We show that the geometric realization of the independence complex of $L_m [H]$ is homotopy equivalent to a wedge sum of spheres if $m \neq 2,3$ and the geometric realization of the independence complex of $H$ is homotopy equivalent to a wedge sum of same dimensional spheres.
\end{abstract}

\maketitle

\section{Introduction}
\label{introduction}
Let $\mathcal{C}$ be a (convenient) category of spaces or the category of abstract simplicial complexes. 
Consider an associative operation $\star: \mathcal{C} \times \mathcal{C} \to \mathcal{C}$ such that if both $i_A : A \to X$ and $i_b : B \to Y$ are the inclusion maps, then so is $i_A \star i_B: A \star B \to X \star Y$.
For an abstract simplicial complex $K$ on $[m]=\{1,2,\ldots,m\}$ and a family of pairs of spaces (complexes) $(\underline{X}, \underline{A}) = \{(X_i \supset A_i)\}_{i \in [m]}$, we can define a union $\mathcal{Z}^{\star}_{K}(\underline{X}, \underline{A})$ of subspaces (subcomplexes) of $X_1 \star X_2 \star \cdots \star X_m$ by
\begin{align*}
\mathcal{Z}^{\star}_{K}(\underline{X}, \underline{A}) = \bigcup_{\sigma \in K} Y^{\sigma}_1 \star Y^{\sigma}_2 \star \cdots \star Y^{\sigma}_m, \ \ Y^{\sigma}_i = \left\{
\begin{aligned}
&X_i & &(i \in \sigma), \\
&A_i & &(i \notin \sigma) . 
\end{aligned} \right.
\end{align*}
(In the following, if $(X_i, A_i)=(X, A)$ $(i \in [m])$ for some pair $(X, A)$, then we write $(\underline{X}, \underline{A})=(X,A)$.)
A representative example of this construction is the {\it polyhedral product} $\mathcal{Z}^{\times}_{K}(\underline{X}, \underline{A})$, which was 
defined by Bahri, Bendersky, Cohen, and Gitler \cite{BBCG10} and has been investigated by many researchers since then.

We assume that $\mathcal{C}$ is the category of finite CW complexes (or the category of abstract simplicial complexes), and that $\star$ is the {\it join} operation $*$. Then $\mathcal{Z}^{*}_{K}(\underline{X}, \underline{A})$ is the {\it polyhedral join} of the family $(\underline{X}, \underline{A})$ over $K$, which was defined by Ayzenberg \cite{Ayzenberg13}.
In this paper, we study the homotopy type of the polyhedral joins of finite CW complexes over the simplicial complex $K$ which meets a certain condition. This condition is a stronger version of {\it vertex-decomposability}, introduced by Bj{\"{o}}rner and Wachs \cite{BjornerWachs97}, and so we name it {\it s-vertex-decomposability}. We obtain the following decomposition. Here, we set
\begin{align*}
X^{*k} &= \left\{
\begin{aligned}
&\underbrace{X * X * \cdots *X}_{k} & & ( k >0) \\
& \emptyset & &(k =0),
\end{aligned} \right. \\
\Sigma^r X &= X * S^{r-1} ,
\end{align*}
where $S^d$ denotes the $d$-dimensional sphere.
\begin{theorem}
\label{s-vertex-decomposable splitting}
Let $K$ be a connected non-empty s-vertex-decomposable simplicial complex on $[m]$ and $\underline{X} = \{X_i\}_{i \in [m]}$ be a family of finite CW complexes. Then we have
\begin{align*}
\mathcal{Z}^*_{K}(\underline{X}, \emptyset) \simeq \bigvee_{\alpha \in A} \left( \Sigma^{r_\alpha} X_1^{*k_{1, \alpha}} * \cdots * X_m^{*k_{m,\alpha}} \right)
\end{align*}
for a family $\{ (r_\alpha, k_{1, \alpha}, \ldots, k_{m, \alpha}) \}_{\alpha \in A}$ of tuples of $m+1$ non-negative integers.
\end{theorem}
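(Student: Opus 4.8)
The plan is to induct on the number of vertices, peeling off one vertex at a time via the link and deletion and assembling the result from two elementary facts about the join: for any space the inclusion of a join factor $Z \hookrightarrow X * Z$ is null-homotopic (contract $Z$ to a point of $X$ along the cone direction), and the join is homotopy-distributive over wedges, $X * \left( \bigvee_\beta Y_\beta \right) \simeq \bigvee_\beta (X * Y_\beta)$, which follows from $X * Y \simeq \Sigma (X \wedge Y)$ together with the distributivity of the smash product over wedges. The base case is a simplex $K = \Delta^{[m]}$, where $\mathcal{Z}^*_{K}(\underline{X}, \emptyset) = X_1 * \cdots * X_m$ is already a single term of the desired form (take $r = 0$ and every $k_i = 1$).

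For the inductive step, fix a shedding vertex $v$ provided by s-vertex-decomposability. From the decomposition $K = \stK{v} \cup \dlK{v}$ with $\stK{v} \cap \dlK{v} = \lkK{v}$, the fact that the polyhedral join sends unions and intersections of subcomplexes to unions and intersections, and the identity $\stK{v} = \{v\} * \lkK{v}$ (which gives $\mathcal{Z}^*_{\stK{v}}(\underline{X}, \emptyset) = X_v * \mathcal{Z}^*_{\lkK{v}}(\underline{X}, \emptyset)$), I obtain a pushout of subcomplex inclusions gluing $X_v * \mathcal{Z}^*_{\lkK{v}}$ and $\mathcal{Z}^*_{\dlK{v}}$ along $\mathcal{Z}^*_{\lkK{v}}$. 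Being a pushout along cofibrations it is a homotopy pushout, and since the leg $\mathcal{Z}^*_{\lkK{v}} \hookrightarrow X_v * \mathcal{Z}^*_{\lkK{v}}$ is the null-homotopic inclusion of a join factor, a homotopy pushout with one null leg splits as the wedge of the opposite vertex with the cofiber of the other leg:
\[
\mathcal{Z}^*_{K}(\underline{X}, \emptyset) \simeq \left( X_v * \mathcal{Z}^*_{\lkK{v}}(\underline{X}, \emptyset) \right) \vee \left( \mathcal{Z}^*_{\dlK{v}}(\underline{X}, \emptyset) \big/ \mathcal{Z}^*_{\lkK{v}}(\underline{X}, \emptyset) \right).
\]

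The crux is the cofiber, and this is where the strengthening of vertex-decomposability is used. I expect s-vertex-decomposability to supply, alongside $v$, a vertex $w$ with $\lkK{v} \subseteq \lk{\dlK{v}}{w}$ (and $w$ not a vertex of $\lkK{v}$); for the independence complex of a forest this is realized by shedding the neighbor of a leaf and taking $w$ to be that leaf, which becomes isolated after the deletion. Such a $w$ factors the inclusion as $\mathcal{Z}^*_{\lkK{v}} \hookrightarrow \mathcal{Z}^*_{\lk{\dlK{v}}{w}} \hookrightarrow X_w * \mathcal{Z}^*_{\lk{\dlK{v}}{w}} = \mathcal{Z}^*_{\st{\dlK{v}}{w}} \hookrightarrow \mathcal{Z}^*_{\dlK{v}}$, whose middle arrow is once more the null-homotopic inclusion of a join factor; hence $\mathcal{Z}^*_{\lkK{v}} \hookrightarrow \mathcal{Z}^*_{\dlK{v}}$ is null-homotopic. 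For a null-homotopic cofibration $A \hookrightarrow C$ one has $C/A \simeq C \vee \Sigma A$, so the cofiber becomes $\mathcal{Z}^*_{\dlK{v}} \vee \Sigma \mathcal{Z}^*_{\lkK{v}}$ and therefore
\[
\mathcal{Z}^*_{K}(\underline{X}, \emptyset) \simeq \left( X_v * \mathcal{Z}^*_{\lkK{v}}(\underline{X}, \emptyset) \right) \vee \mathcal{Z}^*_{\dlK{v}}(\underline{X}, \emptyset) \vee \Sigma \, \mathcal{Z}^*_{\lkK{v}}(\underline{X}, \emptyset).
\]

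To close the induction, note that $\lkK{v}$ and $\dlK{v}$ are s-vertex-decomposable on fewer vertices, so by the inductive hypothesis $\mathcal{Z}^*_{\lkK{v}}$ and $\mathcal{Z}^*_{\dlK{v}}$ are wedges of summands $\Sigma^{s_\beta} X_1^{*l_{1,\beta}} * \cdots * X_m^{*l_{m,\beta}}$. Joining with $X_v$ and suspending both respect this shape—the join distributes over the wedge and raises $k_v$ by one on each summand, while $\Sigma$ raises $r$ by one (using $\Sigma^r Y = Y * S^{r-1}$ and associativity of the join)—so the right-hand side is again a wedge of the required form, completing the induction. I expect the main obstacle to lie in the crux step: extracting from the precise definition of s-vertex-decomposability the vertex $w$ and confirming that ordinary vertex-decomposability does not by itself force $\mathcal{Z}^*_{\lkK{v}} \hookrightarrow \mathcal{Z}^*_{\dlK{v}}$ to be null-homotopic. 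Secondary care is required for degenerate links and deletions (void, or equal to $\{\emptyset\}$) and for the possibility that $\lkK{v}$ or $\dlK{v}$ is disconnected, where one must check that the conventions $X * \emptyset = X$, $X^{*0} = \emptyset$, and $\Sigma \emptyset = S^0$ keep every displayed equivalence valid; all the pushout and cofiber manipulations remain legitimate because every map in sight is an inclusion of subcomplexes, hence a cofibration.
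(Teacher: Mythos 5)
Your overall strategy coincides with the paper's: decompose $K = \stK{v} \cup_{\lkK{v}} \dlK{v}$, observe that $\polyX{\stK{v}} \cong X_v * \polyX{\lkK{v}}$ makes one inclusion null-homotopic, and use $\dlK{v} = \stK{w}$ together with $\lkK{v} \subset \lkK{w}$ to make the other inclusion null-homotopic, arriving at exactly the three-term splitting of Theorem \ref{splitting}; your homotopy-pushout/cofiber packaging is an equivalent repackaging of Lemma \ref{mapping cylinder}. The crux you single out --- that ordinary vertex-decomposability would not force $\polyX{\lkK{v}} \hookrightarrow \polyX{\dlK{v}}$ to be null-homotopic --- is precisely the paper's point (see the remark following Theorem \ref{splitting}).

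There is, however, a genuine gap in your closing step, located exactly where you defer to ``secondary care'': the case where $\lkK{v}$ is disconnected. This case really occurs (for $K = I(L_5)$ with s-shedding vertex $4$ one has $\lk{I(L_5)}{4} = I(L_2)$, two isolated vertices), and two things break in your argument as written. First, your inductive hypothesis is stated only for \emph{connected} complexes, so it does not apply to $\lkK{v}$; indeed $\polyX{\lkK{v}}$ is then a disjoint union, not a wedge. Second, the join does not distribute over disjoint unions the way it does over wedges: one has $(X \sqcup Y) * Z \simeq (X*Z) \vee (Y*Z) \vee \Sigma Z$ (Proposition \ref{disjoint suspension}), so $\Sigma \polyX{\lkK{v}}$ and $X_v * \polyX{\lkK{v}}$ acquire extra summands that your ``join distributes over the wedge'' step misses; this is not merely a matter of conventions about $\emptyset$. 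The paper closes this case with Proposition \ref{non-connected}, the structural fact that a disconnected non-empty s-vertex-decomposable complex must have the form $\stK{u} \sqcup \langle \{u'\} \rangle$, so that $\polyX{\lkK{v}}$ is a wedge of the required shape disjoint union a single $X_{u'}$, and then verifies that the extra summands $\Sigma X_{u'}$, $S^1$, $X_v * X_{u'}$, $\Sigma X_v$ produced by Proposition \ref{disjoint suspension} are again of the required form. You would need to supply this structural fact (or some substitute) to complete the induction.
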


We apply results on polyhedral joins to {\it independence complexes} of graphs. In this paper, a {\it graph} $G$ always means a finite undirected graph with no multiple edges and loops. Its vertex set and edge set are denoted by $V(G)$ and $E(G)$, respectively.
A subset $\sigma$ of $V(G)$ is an {\it independent set} if any two vertices of $\sigma$ are not adjacent. The independent sets of $G$ are closed under taking subsets, so they form an abstract simplicial complex. We call this abstract simplicial complex the independence complex of $G$ and denote by $I(G)$. Namely
\begin{align*}
I(G) = \{ \sigma \subset V(G) \ |\ uv \notin E(G) \text{ for any $u, v \in \sigma$ } \}.
\end{align*}
Independence complexes of graphs are no less important than other simplicial complexes constructed from graphs and have been studied in many contexts. 

A lot of research on the geometric realization of the independence complexes of graphs has been going on since Kozlov \cite{Kozlov99} determined the homotopy type of $|I(L_n)|$ and $|I(C_n)|$, where $L_m$ is a tree on $m$ vertices with no branches, and $C_n$ is a cycle on $n$ vertices ($n \geq 3$). 
One of the tools which are often used in previous studies is {\it discrete Morse theory}. It is a method introduced by Forman \cite{Forman98} and reformulated by Chari \cite{Chari00}. 
Bousquet-M{\'{e}}lou, Linusson, and Nevo \cite{BousquetmelouLinussonNevo08} and Thapper \cite{Thapper08} studied the independence complexes of {\it grid graphs} by performing discrete Morse theory as a combinatorial algorithm called a {\it matching tree}. 
However, it is hard to distinguish two CW complexes that have the same number of cells in each dimension only by discrete Morse theory. 
We need topological approaches in case discrete Morse theory is not available. For example, it is effective to represent an independence complex of a graph as a union of independence complexes of subgraphs, as in Engstr{\"{o}}m \cite{Engstrom09}, Adamaszek \cite{Adamaszek12}, and Barmak \cite{Barmak13}. 
Iriye \cite{Iriye12}, who investigated the independence complexes of some grid graphs by using these methods simultaneously, conjectured that the geometric realizations of the independence complexes of any {\it cylindrical square grid graphs} are homotopy equivalent to wedge sums of spheres.

As Harary \cite{Harary69} mentioned, there are various ways to construct a graph structure on $V(G_1) \times V(G_2)$ for given two graphs $G_1$ and $G_2$. A cylindrical square grid graph is the {\it Cartesian product} of $L_m$ and $C_n$. 
It seems natural to consider other products of $L_m$ and $C_n$ and investigate the geometric realizations of their independence complexes.
In this paper, we are interested in the {\it lexicographic product}.
The lexicographic product $\lex{G}{H}$ can be regarded to have $\card{V(G)}$ pseudo-vertices. Each of them is isomorphic to $H$ and two pseudo-vertices are ``adjacent'' if the corresponding vertices of $G$ are adjacent. Independence complexes of lexicographic products are studied by Vander Meulen and Van Tuyl \cite{VandermeulenVantuyl17} from a combinatorial point of view.
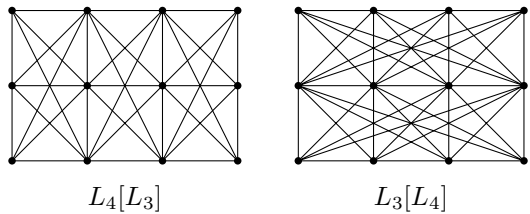
\begin{figure}[thb]
\begin{tabular}{ccc}
\begin{tikzpicture}
\draw (1,1) grid (4,3);
\draw (1,1)--(3,3) (1,2)--(2,3) (2,1)--(4,3) (3,1)--(4,2) (1,3)--(3,1) (1,2)--(2,1) (2,3)--(4,1) (3,3)--(4,2);
\draw (1,1)--(2,3) (1,3)--(2,1) (2,1)--(3,3) (2,3)--(3,1) (3,1)--(4,3) (3,3)--(4,1);
\foreach \x in {1,2,3,4} {\foreach \y in {1,2,3 } {\node at (\x, \y) [vertex] {};};}
\node at (2.5,0.5) {$\lex{L_4}{L_3}$};
\end{tikzpicture}
& &
\begin{tikzpicture}
\draw (1,1) grid (4,3);
\draw (1,1)--(3,3) (1,2)--(2,3) (2,1)--(4,3) (3,1)--(4,2) (1,3)--(3,1) (1,2)--(2,1) (2,3)--(4,1) (3,3)--(4,2);
\draw (1,1)--(3,2) (1,1)--(4,2) (2,1)--(4,2) (3,1)--(1,2) (4,1)--(1,2) (4,1)--(2,2) (1,2)--(3,3) (1,2)--(4,3) (2,2)--(4,3) (3,2)--(1,3) (4,2)--(1,3) (4,2)--(2,3);
\foreach \x in {1,2,3,4} {\foreach \y in {1,2,3 } {\node at (\x, \y) [vertex] {};};}
\node at (2.5,0.5) {$\lex{L_3}{L_4}$};
\end{tikzpicture}
\end{tabular}
\caption{Lexicographic products $\lex{L_4}{L_3}$ and $\lex{L_3}{L_4}$.}
\end{figure}

The following claims correlate our study on the polyhedral join over s-vertex-decomposable simplicial complex and the study on the independence complex of the lexicographic product over a forest.
\begin{itemize}
\item For two graphs $G, H$, we have 
\begin{align}
\label{lexico poly}
|I(\lex{G}{H})| \cong \mathcal{Z}^*_{I(G)} ( |I(H)|, \emptyset ).
\end{align}
\item If a graph $G$ is a forest, then $I(G)$ is s-vertex-decomposable.
\end{itemize}

\noindent
Using these claims, we can deduce the following result on the independence complex of the lexicographic product from Theorem \ref{s-vertex-decomposable splitting}.

\begin{theorem}
\label{forest wedge of spheres}
Let $G$ be a forest and $H$ be a graph. 
Suppose that $G$ has no vertex $u$ such that $uv \in E(G)$ for any $v \in V(G) \setminus \{u\}$ and that $|I(H)|$ is homotopy equivalent to a wedge sum of spheres. 
Then, $|I(\lex{G}{H})|$ is homotopy equivalent to a wedge sum of spheres.
\end{theorem}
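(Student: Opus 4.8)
The plan is to read $|I(\lex{G}{H})|$ through \eqref{lexico poly} as a polyhedral join and then apply Theorem~\ref{s-vertex-decomposable splitting}. Concretely, \eqref{lexico poly} gives $|I(\lex{G}{H})| \cong \mathcal{Z}^*_{I(G)}(|I(H)|, \emptyset)$, the polyhedral join over $K = I(G)$ of the \emph{constant} family $X_i = |I(H)|$ $(i \in V(G))$. Since $G$ is a forest, the second bulleted claim tells us that $I(G)$ is s-vertex-decomposable, so the only remaining hypotheses of Theorem~\ref{s-vertex-decomposable splitting} to be checked are that $I(G)$ is non-empty and connected. Non-emptiness holds as soon as $G$ has a vertex. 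For connectedness, I would use that the $1$-skeleton of $I(G)$ is precisely the complement graph $\overline{G}$, because an edge of $I(G)$ is an independent pair, i.e. a non-edge of $G$; hence $|I(G)|$ is connected if and only if $\overline{G}$ is connected.

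The crux of the proof is to show that the hypothesis ``$G$ has no vertex adjacent to all the others'' is exactly what guarantees $\overline{G}$ connected. I would argue in two cases. If $G$ is disconnected, then $\overline{G}$ is connected by the classical fact that the complement of a disconnected graph is connected. If $G$ is a (connected) tree, I would argue by contraposition: any partition $V(G) = A \sqcup B$ realizing a disconnection of $\overline{G}$ forces every vertex of $A$ to be adjacent in $G$ to every vertex of $B$; as a tree contains no $4$-cycle, this complete bipartite pattern cannot occur unless one side, say $A = \{u\}$, is a single vertex, in which case $u$ is adjacent to all remaining vertices --- the configuration the hypothesis forbids. Thus $\overline{G}$ is connected and Theorem~\ref{s-vertex-decomposable splitting} applies. (The degenerate single-vertex $G$ is itself excluded by the hypothesis, since its unique vertex is vacuously adjacent to all others.)

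Having verified the hypotheses, I would apply Theorem~\ref{s-vertex-decomposable splitting} to the constant family. Writing $W = |I(H)|$, each factor $X_1^{*k_{1,\alpha}} * \cdots * X_m^{*k_{m,\alpha}}$ collapses to $W^{*K_\alpha}$ with $K_\alpha = \sum_{i=1}^m k_{i,\alpha}$, so that
\begin{align*}
|I(\lex{G}{H})| \simeq \bigvee_{\alpha \in A} \Sigma^{r_\alpha} W^{*K_\alpha} = \bigvee_{\alpha \in A} \left( S^{r_\alpha - 1} * W^{*K_\alpha} \right).
\end{align*}
It then remains to feed in the hypothesis that $W$ is a wedge of spheres. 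I would close the argument with the standard homotopy facts that the join distributes over wedges, $(A \vee B) * C \simeq (A * C) \vee (B * C)$ for CW complexes, and that $S^a * S^b \simeq S^{a+b+1}$; expanding each $S^{r_\alpha - 1} * W^{*K_\alpha}$ by these rules exhibits every summand, and therefore the whole space, as a wedge of spheres.

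I expect the connectivity analysis to be the main obstacle, since it is where the hypothesis enters in an essential way: if $\overline{G}$ were disconnected the splitting theorem would be unavailable, and indeed the excluded cases genuinely fail --- for example $I(L_2) = S^0$ and $\mathcal{Z}^*_{S^0}(W, \emptyset) = W \sqcup W$ is not, in general, a wedge of spheres. By contrast, once connectivity is secured the reduction to the constant family and the join/wedge bookkeeping in the last step are routine.
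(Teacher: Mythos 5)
Your proof is correct and follows the same overall route as the paper: identify $|I(\lex{G}{H})|$ with $\mathcal{Z}^*_{I(G)}(|I(H)|,\emptyset)$ via (\ref{lexico poly}), check that $I(G)$ is a connected non-empty s-vertex-decomposable complex, and apply Theorem \ref{s-vertex-decomposable splitting} to the constant family. The one place where you genuinely diverge is the connectivity check, which is indeed where the hypothesis on $G$ does its work. The paper gets connectivity almost for free from machinery it has already built: Proposition \ref{non-connected} says a disconnected non-empty s-vertex-decomposable complex has a component consisting of a single vertex, and an isolated vertex of $I(G)$ is precisely a vertex of $G$ adjacent to all the others, which the hypothesis forbids. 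You instead argue directly on the complement graph $\overline{G}$ (the $1$-skeleton of $I(G)$): a disconnection of $\overline{G}$ forces a spanning complete bipartite pattern in $G$, which a forest (having no $4$-cycle) can only accommodate if one side is a singleton, again producing the forbidden dominating vertex. Both arguments are sound; yours is more elementary and independent of Proposition \ref{non-connected}, while the paper's is shorter given the structural result in hand and makes the dictionary ``dominating vertex of $G$ $\Leftrightarrow$ isolated vertex of $I(G)$'' explicit. The remaining bookkeeping --- collapsing the constant family to $W^{*K_\alpha}$, distributing joins over wedges, and $S^a * S^b \cong S^{a+b+1}$ --- matches the paper's conclusion; like the paper, you leave implicit the small extra care (Proposition \ref{disjoint suspension}) needed when $|I(H)|$ has $S^0$ summands, so this is not a gap relative to the original.
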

\noindent
There is various previous research on graphs such that the geometric realizations of their independence complexes are homotopy equivalent to wedge sums of spheres. 
For example, the result of Van Tuyl and Villarreal \cite[Theorem 2.13]{VantuylVillarreal08} implies that if $G$ is a {\it chordal graph}, then $|I(G)|$ is either contractible or homotopy equivalent to a wedge sum of spheres. (Later, Woodroofe \cite[Theorem 1]{Woodroofe09} refined this result.) This fact was also proved by Kawamura \cite[Theorem 1.1]{Kawamura10}. 

We also present the following explicit calculation.
\begin{theorem}
\label{LnX homotopy}
Let $X$ be a finite CW complex. Then we have
\begin{align*}
\mathcal{Z}^*_{I(L_n)} ( X, \emptyset) \simeq \left\{
\begin{aligned}
& X & &(n=1), \\
&X \sqcup X& &(n=2) ,\\
&X^{*2} \sqcup X & &(n=3), \\
&\bigvee_{k, r \in \mathbb{N}_{\geq 0}} \left( {\bigvee}_{\binom{k+1}{n-2k-3r+1} \binom{k+r}{r}} \Sigma^r X^{*k} \right) & &(n \geq 4).
\end{aligned} \right.
\end{align*}
\end{theorem}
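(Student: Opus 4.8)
The plan is to handle the cases $n\le 3$ directly and to obtain the case $n\ge 4$ from a recursion on $n$. For $n=1$ the complex $I(L_1)$ is a single vertex, so $\mathcal{Z}^*_{I(L_1)}(X,\emptyset)=X$. For $n=2$ the facets of $I(L_2)$ are $\{1\}$ and $\{2\}$, and the corresponding pieces $X*\emptyset=X$ and $\emptyset*X=X$ are the two disjoint ends of the join $X*X$, so $\mathcal{Z}^*_{I(L_2)}(X,\emptyset)=X\sqcup X$. For $n=3$ the facets are $\{1,3\}$ and $\{2\}$, giving the disjoint pieces $X*\emptyset*X=X^{*2}$ and $\emptyset*X*\emptyset=X$, so $\mathcal{Z}^*_{I(L_3)}(X,\emptyset)=X^{*2}\sqcup X$.

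Write $\mathcal{Z}_n=\mathcal{Z}^*_{I(L_n)}(X,\emptyset)$. For $n\ge 4$ I would run the shedding-vertex mechanism behind Theorem \ref{s-vertex-decomposable splitting} at the vertex $2$ of $I(L_n)$. Deleting $2$ from $L_n$ leaves $1$ isolated together with the path on $\{3,\dots,n\}$, so $\dl{I(L_n)}{2}$ is the cone over $I(L_{n-2})$ with apex $1$; since $2$ is adjacent in $L_n$ only to $1$ and $3$, one has $\lk{I(L_n)}{2}=I(L_{n-3})$ on $\{4,\dots,n\}$. Taking polyhedral joins of the decomposition $I(L_n)=\dl{I(L_n)}{2}\cup\st{I(L_n)}{2}$ along $\lk{I(L_n)}{2}$ exhibits $\mathcal{Z}_n$ as the homotopy pushout of $X*\mathcal{Z}_{n-2}\leftarrow\mathcal{Z}_{n-3}\rightarrow X*\mathcal{Z}_{n-3}$. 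The crucial observation is that both structure maps are null-homotopic: the right-hand map is the inclusion of a join factor $\mathcal{Z}_{n-3}\hookrightarrow X*\mathcal{Z}_{n-3}$, and the left-hand map factors as $\mathcal{Z}_{n-3}\to\mathcal{Z}_{n-2}\hookrightarrow X*\mathcal{Z}_{n-2}$ through a join factor because $I(L_{n-3})\subseteq I(L_{n-2})$ and the deletion is a cone; both are null since $X\ne\emptyset$. A homotopy pushout with both legs null-homotopic splits as the wedge of the two ends with the suspension of the apex, so
\begin{align*}
\mathcal{Z}_n\simeq (X*\mathcal{Z}_{n-2})\vee(X*\mathcal{Z}_{n-3})\vee\Sigma\mathcal{Z}_{n-3}.
\end{align*}

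By Theorem \ref{s-vertex-decomposable splitting} I may write $\mathcal{Z}_m\simeq\bigvee_{r,k}(\Sigma^r X^{*k})^{\vee a_m(r,k)}$ for $m\ge 4$, and the same holds formally for $m\le 3$ once the disjoint unions above are recorded with an extra summand $S^0=\Sigma^1 X^{*0}$ (using $X^{*0}=\emptyset$), which gives $a_1(0,1)=1$; $a_2(0,1)=2,\ a_2(1,0)=1$; and $a_3(0,2)=a_3(0,1)=a_3(1,0)=1$. Since $X*\Sigma^r X^{*k}\simeq\Sigma^r X^{*(k+1)}$ and $\Sigma\,\Sigma^r X^{*k}=\Sigma^{r+1}X^{*k}$, and since both $X*(-)$ and $\Sigma(-)$ distribute over wedges up to homotopy, the displayed splitting turns into
\begin{align*}
a_n(r,k)=a_{n-2}(r,k-1)+a_{n-3}(r-1,k)+a_{n-3}(r,k-1)\qquad(n\ge 4).
\end{align*}
It then remains to check that $b_n(r,k):=\binom{k+1}{n-2k-3r+1}\binom{k+r}{r}$ satisfies this recursion with the above base data. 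The base values $b_1,b_2,b_3$ agree with $a_1,a_2,a_3$ by inspection, and, writing $m=n-2k-3r+1$, the right-hand side of the recursion for $b$ is
\begin{align*}
\binom{k}{m}\binom{k+r-1}{r}+\binom{k+1}{m}\binom{k+r-1}{r-1}+\binom{k}{m-1}\binom{k+r-1}{r},
\end{align*}
which collapses to $\binom{k+1}{m}\binom{k+r}{r}=b_n(r,k)$ on applying Pascal's rule to $\binom{k}{m}+\binom{k}{m-1}$ and then to $\binom{k+r-1}{r}+\binom{k+r-1}{r-1}$. An induction on $n$ then yields $a_n=b_n$ for all $n\ge 4$, which is the claimed formula.

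I expect the main obstacle to lie not in the final binomial identity, which is a clean double application of Pascal's rule, but in the topological bookkeeping of the recursion step: verifying that both legs of the pushout are null-homotopic uniformly in $n$, and checking that the disconnected complexes $\mathcal{Z}_2$ and $\mathcal{Z}_3$ — which feed into the recursion for $n=4,5,6$ — are transported correctly by $X*(-)$ and $\Sigma(-)$. Here one must use that suspension and join convert a disjoint union into a genuine wedge of suspensions, via $\Sigma(A\sqcup B)\simeq\Sigma A\vee\Sigma B\vee S^1$ and $X*(A\sqcup B)\simeq(X*A)\vee(X*B)\vee\Sigma X$, so that the formal summand $S^0$ assigned to a disjoint union is justified precisely after one application of $X*(-)$ or $\Sigma(-)$, even though $X\sqcup X$ itself is not a wedge of suspensions.
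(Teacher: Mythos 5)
Your proposal is correct and follows essentially the same route as the paper: the same direct computation for $n\le 3$, the same shedding-vertex splitting (you shed vertex $2$ where the paper sheds $n-1$, which is the same step up to the reflection symmetry of $L_n$) yielding the recursion $\mathcal{Z}_n\simeq (X*\mathcal{Z}_{n-2})\vee(X*\mathcal{Z}_{n-3})\vee\Sigma\mathcal{Z}_{n-3}$, the same care with the disconnected cases $\mathcal{Z}_2,\mathcal{Z}_3$ via $\Sigma(A\sqcup B)$ and $X*(A\sqcup B)$, and the same double application of Pascal's rule to verify the binomial coefficients. No substantive differences from the paper's argument.
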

\noindent
Here, $\binom{p}{q}$ denotes the binomial coefficient. We define $\binom{p}{q}=0$ if $q<0$ or $p<q$. So, the number of the wedge summand is finite.

This calculation deduces the following result.
\begin{theorem}
\label{line theorem}
If $|I(H)| \simeq {\bigvee}_{n} S^k$, then
\begin{align*}
|I(\lex{L_m}{H})|  \simeq &\left\{
\begin{aligned}
&{\bigvee}_n \sphere{k} & &(m=1), \\
&\left( {\bigvee}_n \sphere{k} \right) \sqcup \left( {\bigvee}_n \sphere{k} \right) & &(m=2), \\
&\left( {\bigvee}_{n^2} \sphere{2k+1} \right) \sqcup \left( {\bigvee}_n \sphere{k} \right) & &(m=3), \\
&\bigvee_{d \geq 0} \left( {\bigvee}_{\sum_{p \geq 0} n^p \binom{p+1}{3(d-pk+1)-m} \binom{d-pk+1}{p} } \sphere{d} \right)& &(m \geq 4).
\end{aligned} \right.
\end{align*}
\end{theorem}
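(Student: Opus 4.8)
The plan is to combine the identification \eqref{lexico poly} with the explicit computation in Theorem \ref{LnX homotopy}, and then feed in the hypothesis $|I(H)| \simeq \bigvee_n \sphere{k}$. By \eqref{lexico poly} we have $|I(\lex{L_m}{H})| \cong \mathcal{Z}^*_{I(L_m)}(|I(H)|, \emptyset)$, so setting $X = |I(H)|$ in Theorem \ref{LnX homotopy} reduces everything to understanding how the summands $\Sigma^r X^{*p}$ behave when $X$ is a wedge of $n$ copies of $\sphere{k}$. Since the join of finite CW complexes preserves homotopy equivalences, I may replace $X$ by $\bigvee_n \sphere{k}$ throughout. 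The two facts I will use repeatedly are the distributivity of the join over wedges, $(\bigvee_a \sphere{p}) * (\bigvee_b \sphere{q}) \simeq \bigvee_{ab} \sphere{p+q+1}$, and its consequence $\Sigma^r (\bigvee_a \sphere{p}) \simeq \bigvee_a \sphere{p+r}$, both of which follow for wedges of spheres from $\sphere{p} * \sphere{q} = \sphere{p+q+1}$.

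For $m = 1, 2$ the claim is immediate from Theorem \ref{LnX homotopy}, since the answer is $X$, respectively $X \sqcup X$, with $X \simeq \bigvee_n \sphere{k}$. For $m = 3$, Theorem \ref{LnX homotopy} gives $X^{*2} \sqcup X$, and the distributivity formula yields $X^{*2} \simeq (\bigvee_n \sphere{k})^{*2} \simeq \bigvee_{n^2} \sphere{2k+1}$, which is exactly the stated answer.

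The bulk of the work is the case $m \geq 4$. First I will compute the homotopy type of each summand: iterating the join-over-wedge formula gives $X^{*p} \simeq \bigvee_{n^p} \sphere{p(k+1)-1}$, hence $\Sigma^r X^{*p} = X^{*p} * \sphere{r-1} \simeq \bigvee_{n^p} \sphere{pk + p + r - 1}$. Substituting into Theorem \ref{LnX homotopy}, the summand $\Sigma^r X^{*p}$ contributes spheres of dimension $d := pk + p + r - 1$, each appearing with total multiplicity $n^p \binom{p+1}{m - 2p - 3r + 1}\binom{p+r}{r}$. I then reorganize the double wedge over $(p,r)$ into a wedge indexed by the sphere dimension $d$: for fixed $p$ and $d$, the equation $d = pk + p + r - 1$ forces $r = d - p(k+1) + 1$, so each $d$ receives one contribution per $p$.

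What remains is a bookkeeping identity showing that the resulting multiplicity agrees with the coefficient in the statement. Using $r = d - p(k+1)+1$ I will verify $p + r = d - pk + 1$, so that $\binom{p+r}{r} = \binom{d-pk+1}{p}$, matching the second binomial factor. For the first factor I will use the symmetry $\binom{p+1}{a} = \binom{p+1}{(p+1)-a}$: a direct substitution shows $m - 2p - 3r + 1 = (p+1) - \bigl(3(d-pk+1) - m\bigr)$, so $\binom{p+1}{m-2p-3r+1} = \binom{p+1}{3(d-pk+1)-m}$. Combining these gives precisely the coefficient $\sum_{p \geq 0} n^p \binom{p+1}{3(d-pk+1)-m}\binom{d-pk+1}{p}$ of $\sphere{d}$. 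The only subtlety is finiteness of the sums, handled by the convention that binomial coefficients with out-of-range lower index vanish, exactly as remarked after Theorem \ref{LnX homotopy}; I expect this reindexing-and-symmetry computation, rather than any topological input, to be the main (though routine) obstacle.
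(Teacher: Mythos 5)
Your proposal is correct and follows essentially the same route as the paper: identify $|I(\lex{L_m}{H})|$ with $\mathcal{Z}^*_{I(L_m)}(|I(H)|)$ via (\ref{lexico poly}), substitute into Theorem \ref{LnX homotopy}, use $\Sigma^r(\bigvee_n S^k)^{*p}\simeq\bigvee_{n^p}S^{p(k+1)-1+r}$, and reindex by $d=p(k+1)-1+r$. Your explicit verification of the binomial symmetries $\binom{p+r}{r}=\binom{d-pk+1}{p}$ and $\binom{p+1}{m-2p-3r+1}=\binom{p+1}{3(d-pk+1)-m}$ is exactly the (implicit) bookkeeping step in the paper's final chain of equivalences.
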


\noindent
Even though this is precisely the case in which we cannot use discrete Morse theory, we successfully determine the homotopy type of $|I(\lex{L_m}{H})|$.
With Kozlov's computation \cite[Proposition 5.2]{Kozlov99}, we can determine the homotopy types of $|I(\lex{L_m}{C_n})|$ for any $m \geq 1$ and $n \geq 3$ by Theorem \ref{line theorem}.
This result is in contrast to the case of cylindrical square grid graphs, in which we cannot easily determine the homotopy type for large $m,n$.

\section{Preliminaries}
For a positive integer $m$, we set $[m] = \{1,2,\ldots, m\}$.
We denote the one-point space by $\mathrm{pt}$ and the $d$-dimensional sphere by $S^d$.

Let $(X_1, x_1), \ldots, (X_n, x_n)$ be pointed spaces. The {\it wedge sum} $\bigvee_{i \in [n]} X_i$ of these pointed spaces is defined by
\begin{align*}
\bigvee_{i \in [n]} X_i = \left. \bigsqcup_{i \in [n]} X_i \middle\slash (x_i \sim x_j \text{ for any $i, j \in [n]$} ) . \right. 
\end{align*}
If $(X_1, x_1), \ldots, (X_n, x_n)$ are CW pairs and $X_i$ is connected, then the replacement of the base point $x_i$ with another $0$-cell $y_i$ does not change the homotopy type of the wedge sum. Hence, the wedge sum $X_1 \vee \cdots \vee X_n$ of connected non-empty CW complexes $X_1, \ldots, X_n$ is well-defined up to homotopy. 
We denote the wedges sum of $n$ copies of $X$ by ${\bigvee}_n X$. Namely
\begin{align*}
 {\bigvee}_n X = \left\{
\begin{aligned}
&\underbrace{X \vee \cdots \vee X}_{n} & &(n \geq 1), \\
&\mathrm{pt} & &(n=0).
\end{aligned} \right.
\end{align*}
Though the $0$-sphere $S^0$ is not connected, we use the following notations:
\begin{align*}
 {\bigvee}_n S^0 &= \underbrace{ \mathrm{pt} \sqcup \cdots \sqcup \mathrm{pt}}_{n+1}, \\
X \vee \left( {\bigvee}_n S^0 \right) &=X \sqcup \underbrace{ \mathrm{pt} \sqcup \cdots \sqcup \mathrm{pt}}_{n}.
\end{align*}

In this paper, we deal with finite CW complexes. For two CW complexes, it is known that the {\it join} $X*Y$ has a CW structure with the cells being the product cells of $X \times Y \times (0,1)$ or the cells of $X$ or $Y$ (see \cite{Hatcher01}).
We use the following properties of joins without proofs.
\begin{proposition}
\label{join property}
Let $X, Y$ be finite CW complexes and $X_1, X_2 \subset X$, $Y_1, Y_2 \subset Y$ be subcomplexes.
\begin{enumerate}
\item $X_1 * Y_1$ is a subcomplex of $X*Y$.
\item we have
\begin{align*}
(X_1 \cup X_2) *Y &\cong (X_1 * Y) \cup (X_2 *Y), \\
(X_1 \cap X_2) * (Y_1 \cap Y_2) &\cong (X_1 * Y_1) \cap (X_2 * Y_2).
\end{align*} 
\end{enumerate}
\end{proposition}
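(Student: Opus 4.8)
The plan is to work entirely at the level of the CW structure of the join recalled immediately before the statement. Recall that a cell of $X * Y$ is exactly one of three types: a cell of $X$ (in the copy of $X$ at one end), a cell of $Y$ (in the copy of $Y$ at the other end), or a product cell $e \times f \times (0,1)$ of dimension $\dim e + \dim f + 1$, where $e$ ranges over the cells of $X$ and $f$ over the cells of $Y$. These three families are pairwise disjoint, and a subcomplex of $X*Y$ is determined by the set of cells it contains. This reduces every assertion to a bookkeeping of cells.

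For part (1), I would list the cells of $X_1 * Y_1$: the cells of $X_1$, the cells of $Y_1$, and the product cells $e \times f \times (0,1)$ with $e$ a cell of $X_1$ and $f$ a cell of $Y_1$. Since $X_1 \subset X$ and $Y_1 \subset Y$ are subcomplexes, each such cell is also a cell of $X*Y$, so $X_1 * Y_1$ is a union of cells of $X * Y$. It then remains to check closedness: the closure of a product cell $e \times f \times (0,1)$ meets only product cells $e' \times f' \times (0,1)$ with $e' \subset \overline{e}$, $f' \subset \overline{f}$, together with the cells of $\overline{e} \subset X_1$ and $\overline{f} \subset Y_1$; since $X_1$ and $Y_1$ are closed, all of these already lie in $X_1 * Y_1$. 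Hence $X_1 * Y_1$ is a subcomplex.

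For the two identities in part (2), I would use that unions and intersections of subcomplexes are again subcomplexes, so by part (1) all four of $(X_1 \cup X_2) * Y$, $(X_1 * Y) \cup (X_2 * Y)$, $(X_1 \cap X_2) * (Y_1 \cap Y_2)$ and $(X_1 * Y_1) \cap (X_2 * Y_2)$ are subcomplexes of $X * Y$, and it suffices to match their cell sets. For the first identity, a product cell $e \times f \times (0,1)$ lies in the left-hand side iff $e$ is a cell of $X_1 \cup X_2$, equivalently a cell of $X_1$ or of $X_2$, equivalently the cell lies in $X_1 * Y$ or in $X_2 * Y$; the $X$-type and $Y$-type cells match in the same way, so the two subcomplexes coincide and the asserted homeomorphism is realized by the identity of $X * Y$. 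For the second identity I would argue type by type, using disjointness of the three families: a product cell lies in $(X_1 * Y_1) \cap (X_2 * Y_2)$ iff $e$ is a cell of both $X_1$ and $X_2$ and $f$ is a cell of both $Y_1$ and $Y_2$, that is $e \in X_1 \cap X_2$ and $f \in Y_1 \cap Y_2$, which is precisely the condition for lying in $(X_1 \cap X_2) * (Y_1 \cap Y_2)$; the end cells are handled identically.

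I do not expect a serious obstacle; the only points needing care are the degenerate cases in which one factor is empty (so that $\emptyset * Y = Y$, consistent with ``no product cells and no cells of the empty factor'') and the precise assertion that $X$-type, $Y$-type, and product-type cells are never identified, which is exactly what legitimizes the type-by-type intersection argument in the second identity. Both are immediate from the description of the cells of the join.
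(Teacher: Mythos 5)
Your proof is correct, but note that the paper does not actually prove this proposition: it is stated with the explicit remark ``We use the following properties of joins without proofs,'' with only a citation to Hatcher for the CW structure on $X*Y$. So there is no argument in the paper to compare against; what you have done is supply the omitted justification, and your route is the natural one. Working cell by cell with the three disjoint families of cells of $X*Y$ (cells of $X$, cells of $Y$, product cells $e\times f\times(0,1)$) correctly reduces part (1) to checking that $X_1*Y_1$ is a union of open cells closed under taking closures, and reduces both identities in part (2) to set-theoretic identities on the cell sets; in particular you obtain the relations in (2) as genuine equalities of subcomplexes of $X*Y$, which is stronger than the stated homeomorphisms and is in fact how the paper uses them (e.g.\ in the proofs of Propositions \ref{union decomposition} and \ref{join decomposition}, where the identifications are treated as equalities or canonical homeomorphisms inside a fixed ambient join). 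The two points you flag as needing care --- the empty-factor convention $\emptyset * Y = Y$ and the fact that the three cell types are never identified, which legitimizes the type-by-type analysis of the intersection --- are exactly the right ones, and both are handled adequately.
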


For a space $X$, the {\it unreduced suspension} of $X$, denoted by $\Sigma X$, is defined by
\begin{align*}
\Sigma X = X * S^0 .
\end{align*}
Unreduced suspension often appears in this paper since the following lemma plays an important role.
\begin{lemma}
\label{mapping cylinder}
Let $X$ be a finite CW complex and $X_1, X_2$ be subcomplexes of $X$ such that $X=X_1 \cup X_2$. If the inclusion maps $i_1: X_1\cap X_2 \to X_1$ and $i_2 : X_1 \cap X_2 \to X_2$ are null-homotopic, then we have
\begin{align*}
X \simeq X_1 \cup_{\mathrm{pt}} X_2 \cup_{\mathrm{pt}} \Sigma (X_1 \cap X_2) .
\end{align*}
In addition, suppose that $X_1$ and $X_2$ are connected. Then we have
\begin{align*}
X \simeq X_1 \vee X_2 \vee \Sigma (X_1 \cap X_2) .
\end{align*}
\end{lemma}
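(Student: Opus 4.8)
The plan is to realize $X$ as a homotopy pushout and to exploit the fact that the homotopy type of a homotopy pushout depends only on the homotopy classes of its two gluing maps. Write $A = X_1 \cap X_2$. Since $A$ is a subcomplex of both $X_1$ and $X_2$, the inclusions $i_1, i_2$ are cofibrations, and the commutative square formed by $A \to X_1$, $A \to X_2$, and the inclusions $X_1, X_2 \to X = X_1 \cup X_2$ is a pushout square. Because the two upper maps are cofibrations, this square is in fact a homotopy pushout, so $X$ is homotopy equivalent to the double mapping cylinder
\[
M(i_1, i_2) = \bigl( X_1 \sqcup (A \times [0,1]) \sqcup X_2 \bigr) \big/ \bigl( (a,0) \sim i_1(a), \ (a,1) \sim i_2(a) \bigr).
\]

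First I would replace the gluing maps by constant maps. By the homotopy invariance of the double mapping cylinder, if $i_1$ is homotopic to a constant map $c_{p_1} \colon A \to \{p_1\} \subset X_1$ and $i_2$ to $c_{p_2} \colon A \to \{p_2\} \subset X_2$, then $M(i_1, i_2) \simeq M(c_{p_1}, c_{p_2})$; the hypotheses supply exactly such null-homotopies. In $M(c_{p_1}, c_{p_2})$ the slice $A \times \{0\}$ is collapsed to the single point $p_1$ and $A \times \{1\}$ to $p_2$, and the quotient of $A \times [0,1]$ by these two collapses is the unreduced suspension $\Sigma A = A * S^0$, with its two cone points identified with $p_1 \in X_1$ and $p_2 \in X_2$. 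This yields
\[
X \simeq M(c_{p_1}, c_{p_2}) \cong X_1 \cup_{\mathrm{pt}} X_2 \cup_{\mathrm{pt}} \Sigma(X_1 \cap X_2),
\]
which is the first assertion.

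For the second assertion I would slide the gluing points to the basepoints. The suspension $\Sigma(X_1 \cap X_2)$ is always path-connected, since its two cone points are joined through any suspension coordinate, and $X_1, X_2$ are connected by hypothesis. For connected CW complexes, attaching along a single point is homotopy equivalent to forming the wedge at the basepoints, so the one-point unions above may be converted into a wedge sum, giving $X_1 \vee X_2 \vee \Sigma(X_1 \cap X_2)$.

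The step I expect to be the main obstacle is the homotopy invariance of the double mapping cylinder under homotopic replacement of $i_1$ and $i_2$: one has to check carefully that passing to homotopic gluing maps preserves the homotopy type. This relies on the cofibration hypothesis, which holds here because $X_1 \cap X_2$ is a CW subcomplex, together with the gluing lemma for homotopy equivalences. Once this is in place, the identification of $M(c_{p_1}, c_{p_2})$ with the suspension and the basepoint-sliding in the connected case are routine.
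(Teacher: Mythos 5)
Your proposal is correct and follows essentially the same route as the paper: the paper also replaces $X$ by the double mapping cylinder $X_1 \times \{0\} \cup (X_1 \cap X_2) \times [0,1] \cup X_2 \times \{1\}$ (justified by the CW-pair/cofibration hypotheses), uses the null-homotopies to swap the gluing maps for constant maps, and identifies the resulting middle piece with $\Sigma(X_1 \cap X_2)$ attached at one point to each of $X_1$ and $X_2$. The connectivity step at the end is handled the same way in both arguments.
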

\begin{proof}
We have
\begin{align*}
X_1 \cup X_2 &\simeq (X_1 \cup X_2) \times [0,1] \\
&\simeq X_1 \times \{0\} \cup (X_1 \cap X_2) \times [0,1] \cup X_2 \times \{1\}
\end{align*}
since $(X_1, X_1 \cap X_2)$ and $(X_2, X_1 \cap X_2)$ are CW pairs.

Let $u \in X_1$ and $v \in X_2$ be points such that $i_1 \simeq c_u$ and $i_2 \simeq c_v$, where $c_u : X_1 \cap X_2 \to X_1$ and $c_v :X_1 \cap  X_2 \to X_2$ are the constant maps to $u$ and $v$, respectively. Then, since $(X, X_1)$ and $(X, X_2)$ are CW pairs, we have
\begin{align*}
&X_1 \times \{0\} \cup (X_1 \cap X_2) \times [0,1] \cup X_2 \times \{1\} \\ 
\simeq &X_1 \cup_{\{(u,0)\}} (X_1 \cap X_2) * \{(u,0),(v,1)\} \cup_{\{(v,1)\}} X_2 \\
= &X_1 \cup_{\{(u,0)\}} \Sigma (X_1 \cup X_2) \cup_{\{(v,1)\}} X_2.
\end{align*}
This is the desired conclusion.
\end{proof}

\begin{proposition}
\label{disjoint suspension}
Let $X, Y, Z$ be finite non-empty CW complexes. Then we have
\begin{align*}
(X \sqcup Y) *Z \simeq (X*Z) \vee (Y*Z) \vee \Sigma Z.
\end{align*}
In particular, we have
\begin{align*}
\Sigma(X \sqcup Y) \simeq \Sigma X \vee \Sigma Y \vee \sphere{1}.
\end{align*}
\end{proposition}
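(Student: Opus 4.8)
The plan is to exhibit $(X \sqcup Y) * Z$ as a union of the two subcomplexes $X*Z$ and $Y*Z$, whose intersection is a copy of $Z$ along which both inclusions are null-homotopic, and then to invoke Lemma \ref{mapping cylinder}.

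First I would record the set-theoretic picture. Since $X \sqcup Y$ is the union of the disjoint subcomplexes $X$ and $Y$, Proposition \ref{join property}(2) gives
\begin{align*}
(X \sqcup Y) * Z = (X * Z) \cup (Y * Z),
\end{align*}
and, applying the same proposition to the intersection,
\begin{align*}
(X * Z) \cap (Y * Z) = (X \cap Y) * Z = \emptyset * Z = Z,
\end{align*}
where I use $X \cap Y = \emptyset$ together with the convention $\emptyset * Z = Z$ (consistent with $\Sigma^0 Z = Z * S^{-1} = Z$, taking $S^{-1} = \emptyset$). By Proposition \ref{join property}(1) each of $X*Z$, $Y*Z$, and $Z$ is a subcomplex, so all the pairs appearing below are CW pairs.

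Next I would verify the hypotheses of Lemma \ref{mapping cylinder} for $X_1 = X*Z$ and $X_2 = Y*Z$, so that $X_1 \cap X_2 = Z$. The inclusion $Z \hookrightarrow X*Z$ is null-homotopic: fixing a point $x_0 \in X$, sliding each $z \in Z$ along the join segment toward $x_0$ gives a homotopy inside the cone $\{x_0\} * Z \subset X*Z$ from the inclusion to the constant map at $x_0$. The same argument handles $Z \hookrightarrow Y*Z$. Moreover $X*Z$ and $Y*Z$ are connected, being joins of non-empty complexes, so the wedge version of the lemma applies and yields
\begin{align*}
(X \sqcup Y) * Z \simeq (X*Z) \vee (Y*Z) \vee \Sigma Z,
\end{align*}
which is the assertion. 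The final ``in particular'' statement is then the special case $Z = S^0$: using $(X \sqcup Y)*S^0 = \Sigma(X\sqcup Y)$, $X * S^0 = \Sigma X$, $Y*S^0 = \Sigma Y$, and $\Sigma S^0 = S^0 * S^0 = \sphere{1}$, the displayed equivalence becomes $\Sigma(X \sqcup Y) \simeq \Sigma X \vee \Sigma Y \vee \sphere{1}$.

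I expect the only point requiring care to be the null-homotopy of $Z \hookrightarrow X*Z$ and the identification $(X*Z)\cap(Y*Z) = Z$; everything else is bookkeeping with Proposition \ref{join property}. In particular one should confirm that the contraction of $Z$ toward $x_0$ is a genuine map of CW complexes, but this is immediate from the standard cone structure $\{x_0\}*Z \cong CZ$ sitting inside the join.
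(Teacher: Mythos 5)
Your proposal is correct and follows essentially the same route as the paper: decompose $(X\sqcup Y)*Z$ as $(X*Z)\cup_{\emptyset*Z}(Y*Z)$ via Proposition \ref{join property} and apply Lemma \ref{mapping cylinder}, the inclusions of $Z\cong\emptyset*Z$ being null-homotopic through the cones $\{x_0\}*Z$ and $\{y_0\}*Z$. Your write-up merely supplies the details (the explicit cone contraction, connectedness of the joins, and the $Z=S^0$ specialization) that the paper leaves implicit.
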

\begin{proof}
By Proposition \ref{join property}, we have
\begin{align*}
(X \sqcup Y) *Z \cong (X*Z) \cup_{\emptyset * Z} (Y*Z).
\end{align*}
We obtain the desired conclusion by Lemma \ref{mapping cylinder} since two inclusion maps $\emptyset * Z \to X *Z$, $\emptyset * Z \to Y*Z$ are null-homotopic.
\end{proof}

\begin{proposition}
\label{join wedge}
Let $X, Y, Z$ be non-empty finite CW complexes. Then we have
\begin{align*}
(X \vee Y) *Z \simeq (X*Z) \vee (Y*Z).
\end{align*}
In particular, we have
\begin{align*}
\Sigma (X \vee Y) \simeq \Sigma X \vee \Sigma Y.
\end{align*}
\end{proposition}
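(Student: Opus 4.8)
The plan is to follow exactly the strategy used in the proof of Proposition \ref{disjoint suspension}, with the empty intersection $\emptyset * Z$ there replaced by a cone. First I would realize the wedge as a union of subcomplexes meeting in the basepoint, $X \vee Y = X \cup Y$ with $X \cap Y = \{\mathrm{pt}\}$. Applying Proposition \ref{join property}(2) (distributivity of $*$ over unions, and of $*$ over intersections) with $X_1 = X$, $X_2 = Y$, and $Y_1 = Y_2 = Z$ gives
\begin{align*}
(X \vee Y) * Z \cong (X * Z) \cup (Y * Z), \qquad (X * Z) \cap (Y * Z) \cong \{\mathrm{pt}\} * Z.
\end{align*}
Thus $(X \vee Y) * Z$ is the union of $X * Z$ and $Y * Z$ along the subcomplex $\{\mathrm{pt}\} * Z$.

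The key observation is that $\{\mathrm{pt}\} * Z$ is the cone on $Z$, hence contractible. I would then invoke Lemma \ref{mapping cylinder} with $X_1 = X * Z$ and $X_2 = Y * Z$. The two hypotheses are easy to check: the inclusions $\{\mathrm{pt}\} * Z \to X * Z$ and $\{\mathrm{pt}\} * Z \to Y * Z$ are null-homotopic because any map out of a contractible complex is null-homotopic, and both $X * Z$ and $Y * Z$ are connected since the join of two non-empty complexes is path-connected. The lemma therefore yields
\begin{align*}
(X \vee Y) * Z \simeq (X * Z) \vee (Y * Z) \vee \Sigma(\{\mathrm{pt}\} * Z).
\end{align*}

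Finally I would discard the last wedge summand: $\Sigma(\{\mathrm{pt}\} * Z)$ is the suspension of a contractible complex and hence itself contractible, so wedging with it does not change the homotopy type. This proves $(X \vee Y) * Z \simeq (X * Z) \vee (Y * Z)$, and the ``in particular'' statement follows immediately by taking $Z = S^0$, using $\Sigma(-) = (-) * S^0$. I do not expect any serious obstacle here; the only point requiring a moment's care is verifying the null-homotopy hypothesis of Lemma \ref{mapping cylinder}, which reduces entirely to recognizing $\{\mathrm{pt}\} * Z$ as a cone and therefore contractible.
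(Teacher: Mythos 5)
Your proposal is correct and follows essentially the same route as the paper: decompose $(X \vee Y) * Z$ as $(X*Z) \cup_{\mathrm{pt}*Z} (Y*Z)$ via Proposition \ref{join property}, observe that $\mathrm{pt}*Z$ is a contractible cone, and apply Lemma \ref{mapping cylinder}. Your extra remarks (connectedness of the joins, discarding the contractible summand $\Sigma(\mathrm{pt}*Z)$) simply spell out details the paper leaves implicit.
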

\begin{proof}
By Proposition \ref{join property}, we have
\begin{align*}
(X \vee Y) *Z \cong (X*Z) \cup_{\mathrm{pt} * Z} (Y*Z).
\end{align*}
We obtain the desired conclusion by Lemma \ref{mapping cylinder} since $\mathrm{pt} *Z$ is contractible.
\end{proof}

A {\it finite abstract simplicial complex} $K$ is a collection of finite subsets of a given finite set $V(K)$ such that
if $\sigma \in K$ and $\tau \subset \sigma$, then $\tau \in K$. In this paper, we drop the adjectives ``finite'' and ``abstract''. 
An element of $K$ is called a {\it simplex} of $K$. 
For a simplex $\sigma$ of $K$, we set $\dim \sigma = \card{\sigma} -1 $, where $\card{\sigma}$ is the cardinality of $\sigma$. 
The {\it geometric realization} of $K$ is denoted by $|K|$.

Let $V$ be a finite set and $F_1, \ldots, F_t \subset V$ be a collection of subsets of $V$. Define a simplicial complex $\langle F_1, \ldots, F_t \rangle$ on $V$ by
\begin{align*}
\langle F_1, \ldots, F_t \rangle= \{ \sigma \subset V \ |\ \sigma \subset F_i \text{ for some } i \in [t] \}.
\end{align*}
In particular, $\langle [n+1] \rangle$ is called {\it $n$-simplex} and denoted by $\Delta^n$.

$L \subset K$ is called a {\it subcomplex} of $K$ if $L$ is a simplicial complex. For a simplicial complex $K$ and a vertex $v$ of $K$, we define subcomplexes $\stK{v}, \dlK{v}, \lkK{v}$ of $K$ by
\begin{align*}
\stK{v} &= \{ \sigma \in K \ |\ \sigma \cup \{v\} \in K \}, \\
\dlK{v} &= \{ \sigma \in K \ |\ v \notin \sigma \}, \\
\lkK{v} &= \{ \sigma \in K \ |\ \sigma \cup \{v \} \in K, v \notin \sigma \}.
\end{align*}
By definition, we have
\begin{align}
\label{st lk dl decomposition}
K = \stK{v} \cup_{\lkK{v}} \dlK{v}.
\end{align}

Let $\{K_i\}_{i \in [m]}$ be a family of simplicial complexes. The {\it join} of $K_1, \ldots, K_m$, denoted by $K_1 * \cdots * K_m$, is a simplicial complex defined by
\begin{align}
\label{simplicial join}
K_1 * \cdots * K_m = \left\{ \sigma \subset \bigsqcup_{i \in [m]} V(K_i) \ \middle|\ 
\begin{aligned}
&\sigma \cap V(K_i) \in K_i \\
&\text{ for any $i \in [m]$}  
\end{aligned}
\right\}.
\end{align} 

A {\it finite undirected simple graph} $G$ is a pair $(V(G), E(G))$, where $V(G)$ is a finite set and $E(G)$ is a subset of $2^{V(G)}$ such that $\card{e}=2$ for any $e \in E(G)$. An element of $V(G)$ is called a {\it vertex} of $G$, and an element of $E(G)$ is called an {\it edge} of $G$. In order to indicate that $e=\{u, v\}$ ($u,v \in V(G)$), we write $e =uv$. In this paper, we drop the adjectives ``finite'', ``undirected'', and ``simple'', and call $G$ a {\it graph}.

For a vertex $v \in V(G)$, an {\it open neighborhood} $N_G (v)$ of $v$ in $G$ is defined by 
\begin{align*}
N_G (v) = \{ u \in V(G) \ |\ uv \in E(G) \}. 
\end{align*}
A {\it closed neighborhood} $\neib{G}{v}$ of $v$ in $G$ is defined by $\neib{G}{v} = N_G (v) \sqcup \{ v\}$.

A {\it full subgraph} $H$ of a graph $G$ is a graph such that 
\begin{align*}
V(H) &\subset V(G), \\
E(H) &=\{ uv \in E(G) \ |\ u, v \in V(H) \}.
\end{align*}
For two full subgraphs $H, K$ of $G$, a full subgraph whose vertex set is $V(H) \cap V(K)$ is denoted by $H \cap K$, and a full subgraph whose vertex set is $V(H) \setminus V(K)$ is denoted by $H \setminus K$. 
If $V(G)=V(H) \sqcup V(K)$ and $uv \notin E(G)$ for any $u \in V(H)$ and $v \in V(K)$, we write $G = H \sqcup K$.
For a subset $U \subset V(G)$, $G \setminus U$ is the full subgraph of $G$ such that $V(G \setminus U) = V(G) \setminus U$.

Let $L_n$ be a tree on $n$ vertices with no branches, and $C_n$ be a cycle on $n$ vertices ($n \geq 3$). Namely
\begin{align*}
&V(L_n)=\{1,2,\ldots, n\}, & &E(L_n) = \{ij  \ |\ |i-j|=1 \} , \\
&V(C_n) = \{1,2, \ldots, n \}, & &E(C_n) = E(L_n) \cup \{n1 \}.
\end{align*}
A graph $G$ is a {\it forest} if $G$ does not contain $C_n$ as a full subgraph for any $n \geq 3$.

Recall that the {\it independence complex} $I(G)$ of a graph $G$ is a simplicial complex defined by
\begin{align*}
I(G) = \{ \sigma \subset V(G) \ |\ uv \notin E(G) \text{ for any $u, v \in \sigma$ } \}.
\end{align*}
For a full subgraph $H$ of $G$, $I(H)$ is a subcomplex of $I(G)$. Furthermore, if $H, K$ are full subgraphs of $G$, then $I(H \cap K) = I(H) \cap I(K)$.

Let $G$ be a graph and $v$ be a vertex of $G$. By definition, we have
\begin{align*}
\st{I(G)}{v} &= I(G \setminus N_G(v)), \\
\dl{I(G)}{v} &= I(G \setminus \{v\}), \\
\lk{I(G)}{v} &= I(G \setminus \neib{G}{v} ) .
\end{align*}

\section{S-Vertex-Decomposable Simplicial Complex}
In this section, we define a class of simplicial complexes, which we call {\it s-vertex-decomposable} simplicial complexes.
\begin{definition}
Let $K$ be a simplicial complex. We say that $K$ is {\it s-vertex-decomposable} if
\begin{itemize}
\item $K$ is either a simplex or $\{ \emptyset\}$, or
\item there exist vertices $v, w$ of $K$ such that
\begin{itemize}
\item $\dlK{v}$, $\lkK{v}$ are s-vertex-decomposable, and 
\item $\dlK{v} = \stK{w}$.
\end{itemize}
We call $v$ an {\it s-shedding vertex} of $K$.
\end{itemize} 
\end{definition}

An important example of s-vertex-decomposable simplicial complex is the independence complex of a forest.
\begin{theorem}
\label{forest s-v-d}
Let $G$ be a graph. If $G$ is a forest, then $I(G)$ is s-vertex-decomposable.
\end{theorem}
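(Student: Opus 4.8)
The plan is to prove this by induction on the number of vertices of $G$, using the inductive structure built into the definition of s-vertex-decomposability together with the identifications of $\mathrm{st}$, $\mathrm{dl}$, $\mathrm{lk}$ in terms of independence complexes of subgraphs recorded at the end of the Preliminaries. The base cases are when $G$ has no edges (so $I(G)$ is a simplex) or is empty (so $I(G) = \{\emptyset\}$); both are s-vertex-decomposable by definition. For the inductive step I need to produce an s-shedding vertex $v$ and a witness vertex $w$ with $\mathrm{dl}_{I(G)}(v) = \mathrm{st}_{I(G)}(w)$, and check that $\mathrm{dl}_{I(G)}(v)$ and $\mathrm{lk}_{I(G)}(v)$ are s-vertex-decomposable.

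The natural choice exploits the fact that a forest with at least one edge always has a leaf. Let me take $v$ to be a leaf of $G$, i.e. a vertex of degree $1$, and let $w$ be its unique neighbor. The key computation is then $\mathrm{dl}_{I(G)}(v) = I(G \setminus \{v\})$ and $\mathrm{st}_{I(G)}(w) = I(G \setminus N_G(w))$. These two should coincide: since $v$ is a leaf whose only neighbor is $w$, removing $v$ from $G$ should have the same effect on the independence complex as removing the full neighborhood of $w$. Concretely, $G \setminus \{v\}$ and $G \setminus N_G(w)$ need not be the same graph, but I expect their independence complexes to agree because $v$ is a \emph{dominated} vertex — any independent set avoiding $v$ can use $w$, and the vertices in $N_G(w) \setminus \{v\}$ are exactly the ones whose adjacency to $w$ is being recorded. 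I would verify the set equality $\mathrm{dl}_{I(G)}(v) = \mathrm{st}_{I(G)}(w)$ directly at the level of simplices, checking that $\sigma \in I(G)$ with $v \notin \sigma$ holds iff $\sigma \cup \{w\} \in I(G)$.

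For the two s-vertex-decomposability hypotheses I use the inductive hypothesis: both $G \setminus \{v\}$ and $G \setminus \overline{N}_G(v)$ are full subgraphs of the forest $G$, hence are themselves forests with strictly fewer vertices, so $\mathrm{dl}_{I(G)}(v) = I(G \setminus \{v\})$ and $\mathrm{lk}_{I(G)}(v) = I(G \setminus \overline{N}_G(v))$ are s-vertex-decomposable by induction. This closes the induction. A subtlety I should address is that an isolated vertex is not a leaf, so if $G$ has isolated vertices but also has edges, I should pick $v$ to be a leaf of a nontrivial component; isolated vertices cause no trouble since they simply cone off the whole independence complex.

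\textbf{The main obstacle} I anticipate is the set-theoretic verification that $I(G \setminus \{v\}) = I(G \setminus N_G(w))$ as subcomplexes of $I(G)$ when $v$ is a leaf with neighbor $w$. The inclusion $\mathrm{st}_{I(G)}(w) \subseteq \mathrm{dl}_{I(G)}(v)$ is straightforward (if $\sigma \cup \{w\} \in I(G)$ then $v \notin \sigma$ since $vw \in E(G)$ forces $v$ and $w$ to be non-adjacent in any common independent set). The reverse inclusion is where the leaf condition is essential: given $\sigma \in I(G)$ with $v \notin \sigma$, I must show $\sigma \cup \{w\} \in I(G)$, which requires that no vertex of $\sigma$ is adjacent to $w$. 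This is precisely the place where being a leaf — rather than an arbitrary vertex — matters, and I would argue it carefully, likely by showing that any vertex adjacent to $w$ and lying in $\sigma$ would have to be $v$ itself if the forest structure sufficiently constrains the neighborhood, or more robustly by choosing $v$ and $w$ so that the domination relation $N_G(v) \subseteq \overline{N}_G(w)$ holds and invoking the standard fact that a dominated vertex yields such an equality. Getting this domination argument right, and confirming it is available for leaves in a forest, is the crux of the proof.
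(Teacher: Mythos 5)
Your overall strategy --- induction on $\card{V(G)}$, using a leaf and its unique neighbor to produce the pair $(v,w)$ required by the definition --- is the paper's strategy, but you have assigned the roles of $v$ and $w$ the wrong way around, and with your assignment the key identity $\dl{I(G)}{v} = \st{I(G)}{w}$ is false. You take $v$ to be the leaf and $w$ its unique neighbor. Then $\dl{I(G)}{v} = I(G \setminus \{v\})$ while $\st{I(G)}{w} = I(G \setminus N_G(w))$, and $N_G(w)$ typically contains vertices other than $v$. Concretely, for $G = L_3$ with edges $12$ and $23$, taking the leaf $v=1$ and its neighbor $w=2$ gives $\dl{I(G)}{1} = \langle \{2\},\{3\}\rangle$ but $\st{I(G)}{2} = \langle \{2\}\rangle$: the simplex $\{3\}$ avoids $v=1$, yet $\{2,3\}$ is not independent. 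The rescue you sketch (``any vertex adjacent to $w$ and lying in $\sigma$ would have to be $v$ itself'') cannot work, because the neighbor of a leaf in a forest may have arbitrarily many other neighbors; the condition actually forced by $\dl{I(G)}{v} = \st{I(G)}{w}$ is $N_G(w) = \{v\}$, i.e.\ it is $w$ that must be the leaf and $v$ its neighbor. Your domination heuristic points the wrong way for the same reason.

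The paper does exactly the swapped version: it picks a leaf $l$, lets $v$ be the unique neighbor of $l$, and sets $w = l$. Then $G \setminus \{v\} = G \setminus N_G(l)$ as graphs, so $\dl{I(G)}{v} = \st{I(G)}{l}$ holds on the nose with no simplex-by-simplex argument needed, while $\dl{I(G)}{v}= I(G\setminus\{v\})$ and $\lk{I(G)}{v} = I(G \setminus \neib{G}{v})$ are independence complexes of strictly smaller forests, hence s-vertex-decomposable by induction. Once you interchange $v$ and $w$, the rest of your outline (base cases, the reduction to smaller forests, the remark that isolated vertices are harmless because one chooses a leaf of a nontrivial component) goes through and coincides with the paper's proof.
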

\begin{proof}
We prove the theorem by induction on $\card{V(G)}$. If $\card{V(G)}=0$, then $I(G) = \{\emptyset\}$, which is s-vertex-decomposable. 

For a non-negative integer $n$, assume that $I(G')$ is s-vertex-decomposable for any forest $G'$ such that $\card{V(G')}<n$. 
Let $G$ be a forest such that $\card{V(G)} =n$. If $G$ has no edge, then $I(G)$ is s-vertex-decomposable since $I(G) = \Delta^{n-1}$. If $G$ has at least one edge, then $G$ has a leaf $l$. Let $v \in V(G)$ be a unique vertex of $G$ such that $vl \in E(G)$. Then $\dl{I(G)}{v} = I(G \setminus \{v\})$, $\lk{I(G)}{v} = I(G \setminus \neib{G}{v})$ are s-vertex-decomposable by the assumption of the induction. Furthermore, we have
\begin{align*}
\dl{I(G)}{v} = I(G \setminus \{v\}) = I(G \setminus N_G(l)) = \st{I(G)}{l} .
\end{align*}
Therefore, $I(G)$ is s-vertex-decomposable.
\end{proof}

\begin{remark}
The converse of Theorem \ref{forest s-v-d} does not hold. A counterexample is $G$ in Figure \ref{forest counter example}. 
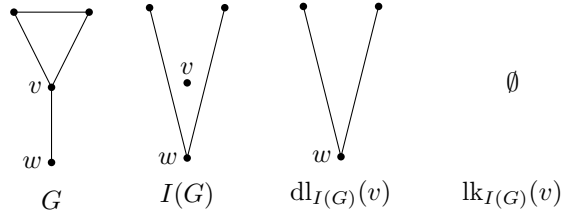
\begin{figure}[thb]
\begin{tabular}{ccccccc}
\begin{tikzpicture}
\draw (0,0)--(0.5,1)--(-0.5,1)--cycle (0,0)--(0,-1);
\foreach \x in {(0,0), (0.5,1), (-0.5,1), (0,-1)} {\node at \x [vertex] {};};
\node at (0,0) [left] {$v$};
\node at (0,-1) [left] {$w$};
\node at (0,-1.5) {$G$};
\end{tikzpicture}
& &
\begin{tikzpicture}
\draw (0.5,1)--(0,-1)--(-0.5,1);
\foreach \x in {(0,0), (0.5,1), (-0.5,1), (0,-1)} {\node at \x [vertex] {};};
\node at (0,0) [above] {$v$};
\node at (0,-1) [left] {$w$};
\node at (0,-1.5) {$I(G)$};
\end{tikzpicture}
& &
\begin{tikzpicture}
\draw (0.5,1)--(0,-1)--(-0.5,1);
\foreach \x in {(0.5,1), (-0.5,1), (0,-1)} {\node at \x [vertex] {};};
\node at (0,-1) [left] {$w$};
\node at (0,-1.5) {$\dl{I(G)}{v}$};
\end{tikzpicture}
& &
\begin{tikzpicture}
\node at (0,0) {$\emptyset$};
\node at (0,-1.5) {$\lk{I(G)}{v}$};
\end{tikzpicture}
\end{tabular}
\caption{A graph $G$ such that $I(G)$ is s-vertex-decomposable but $G$ is not a forest}
\label{forest counter example}
\end{figure}
\end{remark}

We observe that if an s-vertex-decomposable simplicial complex is not connected, then the number of connected components must be $2$, and one of the connected components is a $0$-simplex.
\begin{proposition}
\label{non-connected}
Let $K$ be a non-empty s-vertex-decomposable simplicial complex. If $K$ is not connected, then 
\begin{align*}
K = \stK{w} \sqcup \langle \{v\} \rangle,
\end{align*}
for some $v, w \in V(K)$.
\end{proposition}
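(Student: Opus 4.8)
The plan is to read off the conclusion directly from the canonical decomposition \eqref{st lk dl decomposition} of $K$ along an s-shedding vertex, using the observation that both pieces of that decomposition are cones and hence connected. Since $K$ is non-empty and disconnected, it is neither a simplex nor $\{\emptyset\}$, so it cannot be covered by the first clause of the definition of s-vertex-decomposability; consequently there exist vertices $v, w$ of $K$ such that $\dlK{v}$ and $\lkK{v}$ are s-vertex-decomposable and $\dlK{v} = \stK{w}$. I would fix these particular $v$ and $w$ for the rest of the argument.

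First I would record two connectivity facts. The star $\stK{v}$ is a cone with apex $v$, since every simplex $\sigma \in \stK{v}$ satisfies $\sigma \cup \{v\} \in \stK{v}$; in particular $\stK{v}$ is connected. Likewise $\dlK{v} = \stK{w}$ is a cone with apex $w$, hence also connected. By the decomposition \eqref{st lk dl decomposition} we have $K = \stK{v} \cup \dlK{v}$ with intersection $\stK{v} \cap \dlK{v} = \lkK{v}$.

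The crux of the argument, and the step I expect to carry the weight, is a dichotomy on $\lkK{v}$. If $\lkK{v}$ contained any vertex $u$, then $u$ would be a common vertex of the two connected subcomplexes $\stK{v}$ and $\dlK{v}$, forcing their union $K$ to be connected and contradicting the hypothesis. Hence $\lkK{v}$ has no vertices; and since $v$ is a vertex of $K$ we have $\{v\} \in K$, so $\emptyset \in \lkK{v}$, which pins down $\lkK{v} = \{\emptyset\}$. This says exactly that $v$ is an isolated vertex of $K$: no edge $\{u,v\}$ lies in $K$, so the only simplex of $K$ containing $v$ is $\{v\}$ itself, whence $\stK{v} = \langle \{v\} \rangle$.

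To finish, I would observe that $\stK{v} \cap \dlK{v} = \lkK{v} = \{\emptyset\}$ means the vertex set of $\langle \{v\} \rangle$ is disjoint from that of $\dlK{v}$ and that no simplex of $K$ meets both, i.e. $K = \langle \{v\} \rangle \sqcup \dlK{v}$; substituting $\dlK{v} = \stK{w}$ gives the claimed $K = \stK{w} \sqcup \langle \{v\} \rangle$. The only subtlety to watch is the degenerate-complex bookkeeping, namely confirming that $\lkK{v}$ is the complex $\{\emptyset\}$ rather than the void complex, and that $w \neq v$ (which holds since $w \in \stK{w} = \dlK{v}$, a complex omitting $v$). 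No induction is needed here, because $\dlK{v} = \stK{w}$ being a cone already supplies the connectivity of the large component.
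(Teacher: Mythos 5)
Your proof is correct and follows essentially the same route as the paper: decompose $K = \stK{v} \cup_{\lkK{v}} \dlK{v} = \stK{v} \cup_{\lkK{v}} \stK{w}$ along an s-shedding vertex, note both stars are connected cones, conclude $\lkK{v}$ has no vertices and hence $\stK{v} = \langle \{v\} \rangle$. Your additional bookkeeping (distinguishing $\{\emptyset\}$ from the void complex, checking $v \neq w$) only makes explicit what the paper leaves implicit.
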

\begin{proof}
We first observe that $K$ is not a simplex since $K$ is not connected. So, there exists $v, w$ such that $\dlK{v}$, $\lkK{v}$ are s-vertex-decomposable, and $\dlK{v} = \stK{w}$. Here we have
\begin{align*}
K = \stK{v} \cup_{\lkK{v}} \dlK{v} = \stK{v} \cup_{\lkK{v}} \stK{w}.
\end{align*}
Thus, if $K$ is not connected, then $\lkK{v}$ must be empty since both $\stK{v}$ and $\stK{w}$ are connected. Therefore, we get the desired conclusion since $\lkK{v} = \emptyset$ implies that $\stK{v} = \langle \{v\} \rangle$.
\end{proof}

Recall from \cite[Definition 11.1]{BjornerWachs97} that a simplicial complex $K$ is called {\it vertex decomposable} if 
\begin{itemize}
\item $K$ is either a simplex or $\{ \emptyset\}$, or
\item there exists a vertex $v$ of $K$ such that
\begin{itemize}
\item $\dlK{v}$, $\lkK{v}$ are vertex-decomposable, and 
\item no maximal simplex of $\lkK{v}$ is maximal in $\dlK{v}$.
\end{itemize}
\end{itemize}
\noindent
We show that s-vertex-decomposability is a stronger version of vertex-decomposability.
\begin{lemma}
\label{link inclusion}
Let $K$ be a simplicial complex and $v,w$ be vertices of $K$ such that $\dlK{v} = \stK{w}$. Then we have
\begin{enumerate}
\item $v \neq w$ ,
\item $\lkK{w} = \dlK{v} \cap \dlK{w}$,
\item $\lkK{v} \subset \lkK{w}$.
\end{enumerate}
\end{lemma}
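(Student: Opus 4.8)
The plan is to unwind the three definitions of deletion, star, and link directly, using the single hypothesis $\dlK{v} = \stK{w}$ throughout; no topology is involved, only set-theoretic bookkeeping of simplices, so each part reduces to a short membership argument.

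For part (1) I would argue by contradiction. If $v = w$, then the hypothesis reads $\dlK{v} = \stK{v}$. But $\{v\}$ is a simplex of $K$, and since $\{v\} \cup \{v\} = \{v\} \in K$ we have $\{v\} \in \stK{v}$, whereas $\{v\} \notin \dlK{v}$ because $v \in \{v\}$. This contradiction forces $v \neq w$.

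For part (2) I would first record the elementary identity $\lkK{w} = \stK{w} \cap \dlK{w}$, which is immediate from the definitions: a simplex $\sigma$ lies in $\lkK{w}$ exactly when $\sigma \cup \{w\} \in K$ and $w \notin \sigma$, that is, exactly when $\sigma \in \stK{w}$ and $\sigma \in \dlK{w}$. Substituting $\stK{w} = \dlK{v}$ then yields $\lkK{w} = \dlK{v} \cap \dlK{w}$.

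Part (3) carries the actual content, and its second half is the step I expect to be the main obstacle. Given $\sigma \in \lkK{v}$, so that $v \notin \sigma$ and $\sigma \cup \{v\} \in K$, I must verify the two defining conditions of $\lkK{w}$, namely $\sigma \cup \{w\} \in K$ and $w \notin \sigma$. The first is painless: since $v \notin \sigma$ we have $\sigma \in \dlK{v} = \stK{w}$, whence $\sigma \cup \{w\} \in K$. The delicate point is $w \notin \sigma$, which I would establish by contradiction. Because $\sigma \cup \{v\}$ contains $v$, it cannot lie in $\dlK{v} = \stK{w}$, so $\sigma \cup \{v\} \cup \{w\} \notin K$; but if $w \in \sigma$ then $\sigma \cup \{v\} \cup \{w\} = \sigma \cup \{v\}$, which is in $K$, a contradiction. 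Hence $w \notin \sigma$, so $\sigma \in \lkK{w}$ and the inclusion follows. The only thing to watch throughout is keeping straight which set is being asserted to lie in $K$ and which membership is being excluded via the hypothesis, since the argument hinges on playing the inclusion $\sigma \cup \{v\} \in K$ against the exclusion $\sigma \cup \{v\} \notin \stK{w}$.
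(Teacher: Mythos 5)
Your proposal is correct and follows essentially the same route as the paper: part (1) via $\{v\}\in\stK{v}$ versus $\{v\}\notin\dlK{v}$, part (2) via the identity $\lkK{w}=\stK{w}\cap\dlK{w}$, and part (3) by playing $\sigma\cup\{v\}\in K$ against $\sigma\cup\{v\}\notin\stK{w}$ to rule out $w\in\sigma$. The paper phrases the last step as ``$\{v,w\}\notin K$'' and derives a contradiction from $\{v,w\}\subset\sigma\cup\{v\}\in K$, which is the same argument you give.
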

\begin{proof}
We get $v \neq w$ from $v \notin \dlK{v} = \stK{w} \ni w$. We also obtain
\begin{align*}
\lkK{w} = \stK{w} \cap \dlK{w} = \dlK{v} \cap \dlK{w}.
\end{align*}
Thus, in order to prove (3), it remains to show that $\lkK{v} \subset \dlK{w}$. We have $\{ v, w\} \notin K$ since $v \notin \dlK{v} = \stK{w}$. Thus, if $\sigma \in \lkK{v}$ contains $w$, then it implies $\{v, w\} \subset \sigma \cup \{v\} \in K$, a contradiction. So, $ w \notin \sigma$ for any $\sigma \in \lkK{v}$.
\end{proof}

\begin{proposition}
Let $K$ be a simplicial complex. If $K$ is s-vertex-decomposable, then $K$ is vertex-decomposable.
\end{proposition}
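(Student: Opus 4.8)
The plan is to prove this by induction on the number of vertices of $K$, mirroring the recursive structure of the definition of s-vertex-decomposability and showing at each step that the s-shedding condition forces the (weaker) vertex-decomposability shedding condition. The base case is immediate: if $K$ is a simplex or $\{\emptyset\}$, then $K$ is vertex-decomposable by definition.

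For the inductive step, suppose $K$ is s-vertex-decomposable but not a simplex. Then there exist vertices $v, w$ such that $\dlK{v}$ and $\lkK{v}$ are s-vertex-decomposable and $\dlK{v} = \stK{w}$. By the inductive hypothesis, $\dlK{v}$ and $\lkK{v}$ are vertex-decomposable, since each has strictly fewer vertices than $K$. So it remains only to verify the combinatorial shedding condition for vertex-decomposability at $v$: that no maximal simplex of $\lkK{v}$ is maximal in $\dlK{v}$.

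To establish this, I would take an arbitrary maximal simplex $\sigma$ of $\lkK{v}$ and produce a strictly larger simplex of $\dlK{v}$ containing it. The natural candidate is $\sigma \cup \{w\}$. First, $w \notin \sigma$: by Lemma \ref{link inclusion}(3) we have $\lkK{v} \subset \lkK{w}$, and every simplex of $\lkK{w}$ avoids $w$ by definition of the link. Next, I must check that $\sigma \cup \{w\} \in \dlK{v} = \stK{w}$. Since $\sigma \in \lkK{v} \subset \lkK{w} = \stK{w} \cap \dlK{w}$, in particular $\sigma \in \stK{w}$, so $\sigma \cup \{w\} \in K$; and $v \notin \sigma \cup \{w\}$ because $v \neq w$ by Lemma \ref{link inclusion}(1) and $v \notin \sigma$ (as $\sigma \in \lkK{v}$), so indeed $\sigma \cup \{w\} \in \dlK{v}$. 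Thus $\sigma \subsetneq \sigma \cup \{w\}$ inside $\dlK{v}$, witnessing that $\sigma$ is not maximal in $\dlK{v}$.

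The main point to get right — and the only genuine content beyond bookkeeping — is the membership $\sigma \cup \{w\} \in K$, which is exactly what the inclusion $\lkK{v} \subset \lkK{w}$ from Lemma \ref{link inclusion} delivers; this is where the hypothesis $\dlK{v} = \stK{w}$ is used in an essential way, rather than merely asserting some vertex can be shed. I expect no serious obstacle here: the lemma has already packaged the three facts ($v \neq w$, the identification of $\lkK{w}$, and the link inclusion) needed to run the argument, so the proof should be short and the induction closes cleanly.
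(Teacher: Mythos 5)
Your proof is correct and follows essentially the same route as the paper: induction on the number of vertices, with the key step being that any maximal simplex $\sigma$ of $\lkK{v}$ satisfies $\sigma \subsetneq \sigma \cup \{w\} \in \stK{w} = \dlK{v}$, justified via Lemma \ref{link inclusion}. Your write-up is in fact slightly more explicit than the paper's about why $w \notin \sigma$ and why $\sigma \cup \{w\} \in K$, but the substance is identical.
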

\begin{proof}
Let $K$ be a s-vertex-decomposable simplicial complex. We prove the proposition by induction on $\card{V(K)}$. 
If $\card{V(K)} = 0$, then $K = \{ \emptyset \}$, which is vertex-decomposable.
 
For a non-negative integer $n$, suppose that if $K'$ is s-vertex-decomposable simplicial complex such that $\card{V(K')} < n$, then $K'$ is vertex-decomposable. Consider $K$ with $\card{V(K)} =n$.
If $K = \Delta^{n-1}$, then $K$ is vertex-decomposable. Assume that there exist vertices $v, w$ of $K$ such that
\begin{itemize}
\item $\dlK{v}$, $\lkK{v}$ are s-vertex-decomposable, and 
\item $\dlK{v} = \stK{w}$.
\end{itemize}
$\dlK{v}$ and $\lkK{v}$ are vertex-decomposable by the assumption of the induction. Furthermore, for any maximal simplex $\sigma \in \lkK{v}$, we have 
\begin{align*}
\sigma \subsetneq \sigma \cup \{w\} \in \stK{w} = \dlK{v} .
\end{align*}
The above inclusion follows from Lemma \ref{link inclusion}. So, $\sigma$ is not maximal in $\dl{K}{v}$. Hence, $K$ is vertex-decomposable.
\end{proof}

\section{Homotopy type of Polyhedral Join over S-Vertex-Decomposable Simplicial Complex}
As defined in Section \ref{introduction}, the {\it polyhedral join} of finite CW complexes (or simplicial complexes) is defined as follows.
\begin{definition}[Based on {\cite[Definition 4.2, Observation 4.3]{Ayzenberg13}}]
Let $K$ be a simplicial complex on $[m]$ and $(\underline{X}, \underline{A}) = \{(X_i, A_i)\}_{i \in [m]}$ be a family of finite CW pairs. 
For a simplex $\sigma \in K$, we define $(\underline{X}, \underline{A})^{* \sigma}$ by
\begin{align*}
(\underline{X}, \underline{A})^{* \sigma} = Y_1 * Y_2 * \cdots * Y_m, \ Y_i= \left\{
\begin{aligned}
&X_i & &(i \in \sigma) , \\
&A_i & &(i \notin \sigma) .
\end{aligned} \right.
\end{align*}
$(\underline{X}, \underline{A})^{* \sigma}$ is a subcomplex of $X_1 * \cdots * X_m$ by Proposition \ref{join property}.

$\mathcal{Z}^*_K (\underline{X}, \underline{A})$ is a subcomplex of $X_1 * \cdots *X_m$ defined by
\begin{align*}
\mathcal{Z}^*_K (\underline{X}, \underline{A}) = \bigcup_{\sigma \in K} (\underline{X}, \underline{A})^{* \sigma} 
\end{align*}
(union is taken in $X_1 * \cdots *X_m$).

If each $X_i$ is a simplicial complex and $A_i$ is a subcomplex of $X_i$, then $\mathcal{Z}^*_K (\underline{X}, \underline{A})$ is defined as a simplicial complex by (\ref{simplicial join}).
\end{definition}

In the following, we consider the polyhedral join $\mathcal{Z}^*_K (\underline{X}, \emptyset)$, where $K$ is a simplicial complex and $\underline{X}$ is a family of finite CW complexes. We denote this polyhedral join by $\polyX{K}$.

$\polyX{K}$ behaves well toward operations on the simplicial complex $K$.
\begin{proposition}
\label{union decomposition}
Let $K$ be a simplicial complex and $\underline{X}=\{X_v\}_{v \in V(K)}$ be a family of finite CW complexes.
\begin{enumerate}
\item Let $L \subset K$ be a subcomplex of $K$. Then $\polyX{L}$ is a subcomplex of $\polyX{K}$.
\item Let $L_1, L_2 \subset K$ be subcomplexes of $K$ such that $K= L_1 \cup L_2$. Then we have
\begin{align*}
\polyX{K} = \polyX{L_1} \cup_{\polyX{L_1 \cap L_2}} \polyX{L_2}.
\end{align*}

\end{enumerate}
\end{proposition}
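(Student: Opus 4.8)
The plan is to reduce everything to the defining formula $\polyX{K} = \bigcup_{\sigma \in K} (\underline{X}, \emptyset)^{*\sigma}$ together with the elementary behaviour of joins recorded in Proposition \ref{join property}. For part (1), I would first note that each summand $(\underline{X}, \emptyset)^{*\sigma}$ is a subcomplex of $X_1 * \cdots * X_m$ by Proposition \ref{join property}(1), and that an arbitrary union of subcomplexes of a fixed complex is again a subcomplex. Since $L \subset K$ means every simplex of $L$ is a simplex of $K$, the index set of the union defining $\polyX{L}$ is contained in that defining $\polyX{K}$; hence $\polyX{L} = \bigcup_{\sigma \in L}(\underline{X}, \emptyset)^{*\sigma}$ is a subcomplex of $\polyX{K}$.

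For part (2), the equality of underlying spaces $\polyX{K} = \polyX{L_1} \cup \polyX{L_2}$ is immediate from $K = L_1 \cup L_2$: splitting the index set $K$ into the simplices lying in $L_1$ and those lying in $L_2$ splits the defining union accordingly. The real content is to identify the gluing locus, that is, to prove $\polyX{L_1} \cap \polyX{L_2} = \polyX{L_1 \cap L_2}$. Here I would distribute the intersection of the two unions over pairs $(\sigma, \tau) \in L_1 \times L_2$, writing
\[
\polyX{L_1} \cap \polyX{L_2} = \bigcup_{\sigma \in L_1,\ \tau \in L_2} \left( (\underline{X}, \emptyset)^{*\sigma} \cap (\underline{X}, \emptyset)^{*\tau} \right).
\]
The key computation is the termwise intersection: iterating the join intersection identity of Proposition \ref{join property}(2) across the $m$ factors gives $(\underline{X}, \emptyset)^{*\sigma} \cap (\underline{X}, \emptyset)^{*\tau} = (\underline{X}, \emptyset)^{*(\sigma \cap \tau)}$, using that the $i$-th factor is $X_i \cap X_i = X_i$ when $i \in \sigma \cap \tau$ and is $\emptyset$ (because $X_i \cap \emptyset = \emptyset$) otherwise. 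Finally, as $(\sigma, \tau)$ ranges over $L_1 \times L_2$ the intersection $\sigma \cap \tau$ ranges exactly over $L_1 \cap L_2$: every such $\sigma \cap \tau$ is contained in a simplex of each $L_j$ and hence lies in $L_1 \cap L_2$, while any $\rho \in L_1 \cap L_2$ arises as $\rho \cap \rho$. This collapses the double union to $\bigcup_{\rho \in L_1 \cap L_2} (\underline{X}, \emptyset)^{*\rho} = \polyX{L_1 \cap L_2}$, which is precisely the claimed description $\polyX{K} = \polyX{L_1} \cup_{\polyX{L_1 \cap L_2}} \polyX{L_2}$.

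The step I expect to require the most care is the termwise intersection formula. Proposition \ref{join property}(2) is stated for two factors, so I must justify its iteration to an $m$-fold join by a routine induction on $m$, and, more importantly, keep track of the empty-set factors: the whole point is that a single index $i \in (\sigma \cup \tau) \setminus (\sigma \cap \tau)$ forces the corresponding factor, and therefore the entire intersection, to collapse to $\emptyset$ via $X_i \cap \emptyset = \emptyset$. The only remaining subtlety is the bookkeeping around the empty simplex and the convention $X * \emptyset = X$, but these cause no genuine difficulty once the conventions of the definition of $\polyX{K}$ are respected.
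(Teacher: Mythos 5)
Your proposal is correct and follows essentially the same route as the paper: both reduce part (2) to the termwise identity $(\underline{X}, \emptyset)^{*\sigma} \cap (\underline{X}, \emptyset)^{*\tau} = (\underline{X}, \emptyset)^{*(\sigma \cap \tau)}$ obtained by iterating Proposition \ref{join property}(2), and then use $\sigma \cap \tau \in L_1 \cap L_2$ to conclude. The only cosmetic difference is that the paper phrases the hard inclusion pointwise (picking $x \in \polyX{L_1} \cap \polyX{L_2}$) whereas you distribute the intersection over the defining unions, which is the same computation.
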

\begin{proof}
(1) directly follows from the definition of polyhedral joins. 

We prove (2). First, we have
\begin{align*}
\polyX{K} &= \bigcup_{\sigma \in K} (\underline{X}, \emptyset)^{* \sigma} \\
&=\left(\bigcup_{\sigma \in L_1} (\underline{X}, \emptyset)^{* \sigma} \right) \cup \left( \bigcup_{\tau \in L_2} (\underline{X}, \emptyset)^{* \tau} \right) \\
&=\polyX{L_1} \cup \polyX{L_2}.
\end{align*}
Remarking that we obtain $\polyX{L_1} \cap \polyX{L_2} \supset \polyX{L_1 \cap L_2}$ from (1), it remains to show that $\polyX{L_1} \cap \polyX{L_2} \subset \polyX{L_1 \cap L_2}$. 
For a point $x \in \polyX{L_1} \cap \polyX{L_2}$, there exist $\sigma \in L_1$ and $\tau \in L_2$ such that 
\begin{align*}
x &\in (\underline{X}, \emptyset)^{* \sigma} \cap (\underline{X}, \emptyset)^{* \tau} \\
&= (A_1 * \cdots *A_m) \cap (B_1 * \cdots *B_m) & &\left( 
\begin{aligned}
&A_i = \left\{
\begin{aligned}
&X_i & &(i \in \sigma) \\
&\emptyset & &(i \notin \sigma)
\end{aligned} \right.  \\
&B_i = \left\{
\begin{aligned}
&X_i & &(i \in \tau) \\
&\emptyset & &(i \notin \tau)
\end{aligned} \right. 
\end{aligned}
\right) \\
&= (A_1 \cap B_1) * \cdots *(A_m \cap B_m) & &\left(A_i \cap B_i = \left\{
\begin{aligned}
&X_i  & &(i \in \sigma \cap \tau) \\
&\emptyset & &(i \notin \sigma \cap \tau)
\end{aligned}
\right. \right) \\
&= (\underline{X}, \emptyset)^{* (\sigma \cap \tau)}.
\end{align*}
The second equality follows from Proposition \ref{join property}.
So, we get $x \in \polyX{L_1 \cap L_2}$ since $\sigma \cap \tau \in L_1 \cap L_2$.
\end{proof}

\begin{remark}
Proposition \ref{union decomposition} does not hold for $\mathcal{Z}^*_{K} (\underline{X}, \underline{A})$ if $\underline{A} \neq \emptyset$.
For example, consider $K=\langle \{1\}, \{2\} \rangle$, $L_1 = \langle \{1\} \rangle$, $L_2= \langle \{2 \} \rangle$, and $X_1=a \sqcup b$, $A_1 = a$, $X_2 = c \sqcup d$, $A_2 = c$, where $a, b, c, d$ are points. Then
\begin{align*}
&\mathcal{Z}^*_{K} ( \underline{X}, \underline{A}) = (ac \cup bc) \cup (ac \cup ad) = bc \cup ac \cup ad , \\
&\mathcal{Z}^*_{L_1} ( \underline{X}, \underline{A}) \cup_{\mathcal{Z}^*_{L_1 \cap L_2}(\underline{X}, \underline{A})} \ \mathcal{Z}^*_{L_2} ( \underline{X}, \underline{A}) = (a \sqcup b) \sqcup (c \sqcup d) ,
\end{align*}
where $ac, bc, ad$ denote line segments (see Figure \ref{union counter example}).

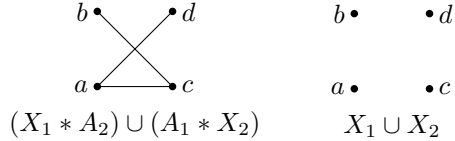
\begin{figure}[thb]
\begin{tabular}{ccc}
\begin{tikzpicture}
\node at (1,1) [vertex] {}; \node at (1,1) [left] {$a$}; 
\node at (1,2) [vertex] {}; \node at (1,2) [left] {$b$};
\node at (2,1) [vertex] {}; \node at (2,1) [right] {$c$};
\node at (2,2) [vertex] {}; \node at (2,2) [right] {$d$};
\draw (1,2)--(2,1)--(1,1)--(2,2);
\node at (1.5, 0.5) {$(X_1 * A_2) \cup (A_1 * X_2)$};
\end{tikzpicture}
& &
\begin{tikzpicture}
\node at (1,1) [vertex] {}; \node at (1,1) [left] {$a$}; 
\node at (1,2) [vertex] {}; \node at (1,2) [left] {$b$};
\node at (2,1) [vertex] {}; \node at (2,1) [right] {$c$};
\node at (2,2) [vertex] {}; \node at (2,2) [right] {$d$};
\node at (1.5, 0.5) {$X_1 \cup X_2$};
\end{tikzpicture}
\end{tabular}
\caption{An example of $\mathcal{Z}^*_{L_1 \cup L_2} (\underline{X}, \underline{A}) \neq \mathcal{Z}^*_{L_1} (\underline{X}, \underline{A}) \cup \mathcal{Z}^*_{L_2} (\underline{X}, \underline{A})$}
\label{union counter example}
\end{figure}
\end{remark}

\begin{proposition}
\label{join decomposition}
Let $K, L$ be simplicial complexes and  $\underline{X}=\{X_v\}_{v \in V(K) \sqcup V(L)}$ be a family of finite CW complexes. Then we have
\begin{align*}
\polyX{K*L} \cong \polyX{K} * \polyX{L}.
\end{align*}
\end{proposition}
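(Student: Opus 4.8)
The plan is to prove the stronger assertion that the two sides are \emph{equal} as subcomplexes, once we fix the canonical identification (coming from associativity and commutativity of the join) between the join of all the complexes $X_v$ for $v \in V(K) \sqcup V(L)$ and the join of the ambient space of $\polyX{K}$ with that of $\polyX{L}$. After that identification, both $\polyX{K*L}$ and $\polyX{K} * \polyX{L}$ sit inside the same join, so it suffices to show that their decompositions into elementary pieces agree.

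First I would expand the right-hand side. Each $X_v$ is a finite CW complex, so the unions defining $\polyX{K}$ and $\polyX{L}$ are finite, and iterating the distributivity of the join over unions from Proposition \ref{join property}(2) (together with its symmetric form) gives
\begin{align*}
\polyX{K} * \polyX{L} = \left( \bigcup_{\sigma \in K} (\underline{X}, \emptyset)^{* \sigma} \right) * \left( \bigcup_{\tau \in L} (\underline{X}, \emptyset)^{* \tau} \right) = \bigcup_{\substack{\sigma \in K \\ \tau \in L}} \left( (\underline{X}, \emptyset)^{* \sigma} * (\underline{X}, \emptyset)^{* \tau} \right).
\end{align*}

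The key identity is that, for $\sigma \in K$ and $\tau \in L$,
\begin{align*}
(\underline{X}, \emptyset)^{* \sigma} * (\underline{X}, \emptyset)^{* \tau} = (\underline{X}, \emptyset)^{* (\sigma \sqcup \tau)}.
\end{align*}
Indeed, since $\emptyset$ is a two-sided unit for the join, the empty factors drop out, so $(\underline{X}, \emptyset)^{* \sigma}$ is the join of the $X_v$ over $v \in \sigma$ and $(\underline{X}, \emptyset)^{* \tau}$ is the join over $w \in \tau$; joining these yields the join over $\sigma \sqcup \tau$, which is exactly $(\underline{X}, \emptyset)^{* (\sigma \sqcup \tau)}$ because $V(K) \cap V(L) = \emptyset$. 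To finish, I would invoke the definition (\ref{simplicial join}) of the simplicial join: the assignment $(\sigma, \tau) \mapsto \sigma \sqcup \tau$ is a bijection from $K \times L$ onto $K * L$, with inverse $\gamma \mapsto (\gamma \cap V(K), \gamma \cap V(L))$. Re-indexing the union above along this bijection turns it into $\bigcup_{\gamma \in K * L} (\underline{X}, \emptyset)^{* \gamma}$, which is $\polyX{K*L}$ by definition.

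The argument is essentially bookkeeping, so I do not expect a serious obstacle. The two points that require care are (i) making the ambient associativity identification explicit, so that the symbol $\cong$ is an honest isomorphism of subcomplexes rather than a mere homotopy equivalence, and (ii) the degenerate cases in which $\sigma$, $\tau$, or the complexes $K, L$ are empty; both are handled uniformly by treating $\emptyset$ as the unit of the join, which is also what lets the key identity hold without case distinctions.
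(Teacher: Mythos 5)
Your proposal is correct and is essentially the paper's own argument run in the opposite direction: the paper starts from $\polyX{K*L}$, uses the bijection $\gamma \mapsto (\gamma \cap V(K), \gamma \cap V(L))$ and the identity $(\underline{X}, \emptyset)^{* (\sigma \sqcup \tau)} = (\underline{X}, \emptyset)^{* \sigma} * (\underline{X}, \emptyset)^{* \tau}$, and then collects the union into a join of unions via Proposition \ref{join property}, exactly the same ingredients you use. Your explicit attention to the ambient associativity identification and the degenerate empty cases is a minor elaboration of what the paper leaves implicit, not a different route.
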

\begin{proof}
Remark that each simplex of $K * L$ is a disjoint union of simplexes $\sigma \in K$ and $\tau \in L$. Thus, 
\begin{align*}
\polyX{K*L} &= \bigcup_{\sigma \sqcup \tau \in K*L} (\underline{X}, \emptyset)^{* (\sigma \sqcup \tau)} \\
&=\bigcup_{\sigma \in K, \tau \in L} \left( (\underline{X}, \emptyset)^{* \sigma} * (\underline{X}, \emptyset)^{* \tau} \right) \\
&\cong \left( \bigcup_{\sigma \in K} (\underline{X}, \emptyset)^{* \sigma} \right) * \left( \bigcup_{\tau \in L} (\underline{X}, \emptyset)^{* \tau} \right) \\
&=\polyX{K} * \polyX{L}.
\end{align*}
Note that the homeomorphism follows from Proposition \ref{join property}.
\end{proof}

Now we consider $\polyX{K}$ with $K$ s-vertex-decomposable. The following theorem is essentially important throughout this paper.
\begin{theorem}
\label{splitting}
Let $K$ be a simplicial complex. Suppose that there exist vertices $v, w$ of $K$ such that $\lkK{v} \neq \emptyset$ and $\dlK{v} = \stK{w}$.
Let $\underline{X}=\{X_v\}_{v \in V(K)}$ be a family of non-empty finite CW complexes. Then we have
\begin{align*}
\polyX{K} \simeq &\Sigma \polyX{\lkK{v}} \vee \left(\polyX{\lkK{v}} * X_v \right) \vee \polyX{\dlK{v}}  .
\end{align*}
\end{theorem}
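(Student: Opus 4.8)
The plan is to exploit the decomposition $K = \stK{v} \cup_{\lkK{v}} \dlK{v}$ from (\ref{st lk dl decomposition}) together with the two propositions that describe how $\polyX{-}$ interacts with unions (Proposition \ref{union decomposition}) and joins (Proposition \ref{join decomposition}). Applying $\polyX{-}$ to (\ref{st lk dl decomposition}) and using Proposition \ref{union decomposition}(2) gives
\begin{align*}
\polyX{K} = \polyX{\stK{v}} \cup_{\polyX{\lkK{v}}} \polyX{\dlK{v}}.
\end{align*}
The key structural observation is that $\stK{v}$ is the simplicial join $\lkK{v} * \langle \{v\} \rangle$, since every simplex of $\stK{v}$ splits uniquely as a simplex of $\lkK{v}$ together with an optional $\{v\}$. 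Hence by Proposition \ref{join decomposition} we obtain $\polyX{\stK{v}} \cong \polyX{\lkK{v}} * X_v$, and the whole complex becomes
\begin{align*}
\polyX{K} = \left( \polyX{\lkK{v}} * X_v \right) \cup_{\polyX{\lkK{v}}} \polyX{\dlK{v}},
\end{align*}
where the intersection $\polyX{\lkK{v}}$ sits inside the left factor via the inclusion $\polyX{\lkK{v}} = \polyX{\lkK{v}} * \emptyset \hookrightarrow \polyX{\lkK{v}} * X_v$.

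The next step is to feed this into the gluing Lemma \ref{mapping cylinder}, writing $X_1 = \polyX{\lkK{v}} * X_v$, $X_2 = \polyX{\dlK{v}}$, and $X_1 \cap X_2 = \polyX{\lkK{v}}$. For that lemma I must verify that \emph{both} inclusion maps of the intersection into the two pieces are null-homotopic. The inclusion $\polyX{\lkK{v}} \hookrightarrow \polyX{\lkK{v}} * X_v$ is null-homotopic because the join of any space with a non-empty space deformation retracts the first factor through the cone it spans, and $X_v \neq \emptyset$ by hypothesis (this is precisely where non-emptiness of the family is used). The second inclusion $\polyX{\lkK{v}} \hookrightarrow \polyX{\dlK{v}}$ needs the hypothesis $\dlK{v} = \stK{w}$: since $\lkK{v} \subset \lkK{w}$ by Lemma \ref{link inclusion}(3), the inclusion factors through $\polyX{\lkK{w}} \hookrightarrow \polyX{\stK{w}} = \polyX{\dlK{v}}$, and the latter is again an inclusion into a polyhedral join over a star, which decomposes as a join with $X_w$ and is therefore null-homotopic by the same cone argument.

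With both inclusions null-homotopic, Lemma \ref{mapping cylinder} yields
\begin{align*}
\polyX{K} \simeq \left( \polyX{\lkK{v}} * X_v \right) \vee \polyX{\dlK{v}} \vee \Sigma \polyX{\lkK{v}},
\end{align*}
which is exactly the claimed wedge decomposition. I anticipate that the main obstacle is the null-homotopy of the second inclusion $\polyX{\lkK{v}} \hookrightarrow \polyX{\dlK{v}}$: unlike the first, it does not come from a bare join-with-nonempty argument, and the hypothesis $\dlK{v} = \stK{w}$ together with Lemma \ref{link inclusion} must be invoked to route the inclusion through the star of $w$ so that a cone contraction is available. I would also need to check the connectivity hypothesis in the second half of Lemma \ref{mapping cylinder} (so that the conclusion is a genuine wedge rather than a one-point union), noting that $\polyX{\lkK{v}} * X_v$ is connected whenever $X_v$ is non-empty, and handling $\polyX{\dlK{v}}$ by the same star-join reasoning.
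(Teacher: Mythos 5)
Your proposal is correct and follows essentially the same route as the paper's proof: the same decomposition $\polyX{K} = \polyX{\stK{v}} \cup_{\polyX{\lkK{v}}} \polyX{\dlK{v}}$, the same identification $\polyX{\stK{v}} \cong \polyX{\lkK{v}} * X_v$ via Proposition \ref{join decomposition}, the same use of Lemma \ref{link inclusion} to factor the second inclusion through $\polyX{\lkK{w}} * X_w$ so that a cone contraction applies, and the same final appeal to Lemma \ref{mapping cylinder}.
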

\begin{proof}
By Proposition \ref{union decomposition} and the decomposition (\ref{st lk dl decomposition}), we have
\begin{align*}
\polyX{K} = \polyX{\stK{v}} \cup_{\polyX{\lkK{v}}} \polyX{\dlK{v}} .
\end{align*}

Let $i : \polyX{\lkK{v}} \to \polyX{\stK{v}}$ and 
$j: \polyX{\lkK{v}} \to \polyX{\dlK{v}}$ be the inclusion maps. By Proposition \ref{join decomposition}, we have
\begin{align*}
\polyX{\stK{v}} &= \polyX{\lkK{v} * \langle \{v\} \rangle}  \\
&\cong  \polyX{\lkK{v}} * X_v, \\
\polyX{\dlK{v}} &= \polyX{\stK{w}}  \\
&\cong \polyX{\lkK{w}} * X_w.
\end{align*}
Let $x \in X_v$ and $y \in X_w$ be points. Then, we have
\begin{align*}
\polyX{\lkK{v}} * \{x\} &\subset  \polyX{\lkK{v}} * X_v, \\
\polyX{\lkK{v}} * \{y\} &\subset \polyX{\lkK{w}} * \{y\} \\
&\subset \polyX{\lkK{w}} * X_w .
\end{align*}
Note that the second inclusion is obtained from Lemma \ref{link inclusion}.
These inclusions indicate that $i, j$ are null-homotopic. Therefore, by Lemma \ref{mapping cylinder}, we obtain
\begin{align*}
\polyX{K} = &\polyX{\stK{v}} \cup_{\polyX{\lkK{v}}} \polyX{\dlK{v}} \\
\simeq &\Sigma \polyX{\lkK{v}} \vee \polyX{\stK{v}} \vee \polyX{\dlK{v}} \\
\simeq &\Sigma \polyX{\lkK{v}} \vee \left(\polyX{\lkK{v}} * X_v \right) \vee \polyX{\dlK{v}}.
\end{align*} 
So, the proof is completed.
\end{proof}

\begin{remark}
The assumption ``$\dlK{v} = \stK{w}$'' in Theorem \ref{splitting} cannot be replaced with ``no maximal simplex of $\lkK{v}$ is maximal in $\dlK{v}$'', which appears in the definition of vertex-decomposability. Consider a simplicial complex $K$ and a vertex $v$ in Figure \ref{splitting counter example}.

\begin{figure}[thb]
\begin{tabular}{ccccc}
\begin{tikzpicture}
\node at (-0.865, 1) [vertex] {};
\node at (-0.865,0) [vertex] {};
\node at (0,0.5) [vertex] {};
\node at (0,-0.5) [vertex] {};
\node at (0.865, 1) [vertex] {};
\node at (0.865,0) [vertex] {};
\fill [opacity=0.3] (0,0.5)--(-0.865,1)--(-0.865,0)--(0,-0.5)--(0.865,0)--(0.865,1)--cycle;
\draw (0,0.5)--(-0.865,1)--(-0.865,0)--(0,-0.5)--(0.865,0)--(0.865,1)--cycle (0,0.5)--(-0.865,0) (0,0.5)--(0,-0.5) (0, 0.5)--(0.865, 0);
\node at (-0.865, 1) [left] {$a$};
\node at (-0.865,0) [left] {$b$};
\node at (0,0.5) [above] {$u$};
\node at (0,-0.5) [below] {$v$};
\node at (0.865, 1) [right] {$d$};
\node at (0.865,0) [right] {$c$};
\node at (0,-1) {$K$};
\end{tikzpicture}
&  &
\begin{tikzpicture}
\node at (-0.865, 1) [vertex] {};
\node at (-0.865,0) [vertex] {};
\node at (0,0.5) [vertex] {};
\node at (0.865, 1) [vertex] {};
\node at (0.865,0) [vertex] {};
\fill [opacity=0.3] (0,0.5)--(-0.865,1)--(-0.865,0)--cycle (0,0.5)--(0.865,0)--(0.865,1)--cycle;
\draw (0,0.5)--(-0.865,1)--(-0.865,0)--cycle (0,0.5)--(0.865,0)--(0.865,1)--cycle;
\node at (-0.865, 1) [left] {$a$};
\node at (-0.865,0) [left] {$b$};
\node at (0,0.5) [above] {$u$};
\node at (0.865, 1) [right] {$d$};
\node at (0.865,0) [right] {$c$};
\node at (0,-1) {$\dlK{v}$};
\end{tikzpicture}
& &
\begin{tikzpicture}
\node at (-0.865,0) [vertex] {};
\node at (0,0.5) [vertex] {};
\node at (0.865,0) [vertex] {};
\draw (0,0.5)--(-0.865,0) (0,0.5)--(0.865,0);
\node at (-0.865,0) [left] {$b$};
\node at (0,0.5) [above] {$u$};
\node at (0.865,0) [right] {$c$};
\node at (0,-1) {$\lkK{v}$};
\end{tikzpicture}
\end{tabular}
\caption{An example of $v$ that does not satisfy the assumption in Theorem \ref{splitting}}
\label{splitting counter example}
\end{figure}
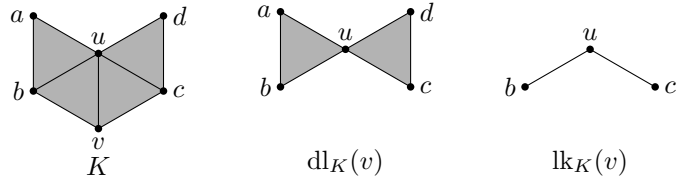

We get
\begin{align*}
\mathcal{Z}^*_{K} (\underline{X}) &\simeq (X_u * X_a *X_b) \vee (X_u * X_b * X_v) \vee (X_u * X_v * X_c) \\*
&\  \vee (X_u * X_c * X_d) \vee \Sigma(X_u * X_b) \vee \Sigma(X_u * X_v) \vee \Sigma(X_u * X_c) , \\
\mathcal{Z}^*_{\dlK{v}} (\underline{X}) &\simeq (X_u * X_a * X_b) \vee (X_u * X_c *X_d) \vee \Sigma X_u , \\
\mathcal{Z}^*_{\lkK{v}} (\underline{X}) &\simeq (X_u * X_b) \vee (X_u * X_c) \vee \Sigma X_u
\end{align*}
by the same argument as in the proof of Theorem \ref{splitting}. Therefore, we conclude that
\begin{align*}
&\Sigma \polyX{\lkK{v}} \vee \left(\polyX{\lkK{v}} * X_v \right) \vee \polyX{\dlK{v}} \\
\simeq & \Sigma((X_u * X_b) \vee (X_u * X_c) \vee \Sigma X_u) \\*
&\ \vee ((X_u * X_b) \vee (X_u * X_c) \vee \Sigma X_u) *X_v \\*
&\ \vee (X_u * X_a * X_b) \vee (X_u * X_c *X_d) \vee \Sigma X_u \\
\simeq & (X_u * X_a *X_b) \vee (X_u * X_b * X_v) \vee (X_u * X_v * X_c) \\*
&\  \vee (X_u * X_c * X_d) \vee \Sigma(X_u * X_b) \vee \Sigma(X_u * X_v) \vee \Sigma(X_u * X_c)  \\*
&\ \vee \Sigma X_u \vee \Sigma^2 X_u \\
\simeq &\mathcal{Z}^*_{K} (\underline{X}) \vee \Sigma X_u \vee \Sigma^2 X_u \\
\not\simeq &\mathcal{Z}^*_{K} (\underline{X}) .
\end{align*}
\end{remark}

Theorem \ref{s-vertex-decomposable splitting} and Theorem \ref{LnX homotopy} are obtained from Theorem \ref{splitting}. 
\begin{proof}[Proof of Theorem \ref{s-vertex-decomposable splitting}]
If $K$ is a simplex, then we have
\begin{align*}
\polyX{K} = X_1 * \cdots *X_m.
\end{align*}

We prove the theorem by induction on $m$. If $m=1$, then $K = \Delta^0$ and $\polyX{K} = X_1$.
Let $n$ be a positive integer. Assume that $\polyX{K'}$ can be decomposed as in Theorem \ref{s-vertex-decomposable splitting} for any s-vertex-decomposable simplicial complex $K'$ with $\card{V(K')} < n$.
Consider an s-vertex-decomposable simplicial complex $K$ with $\card{V(K)}=n$ which is not a simplex. 
Let $v, w$ be vertices of $K$ such that 
\begin{itemize}
\item $\dlK{v}$, $\lkK{v}$ are s-vertex-decomposable, and 
\item $\dlK{v} = \stK{w}$.
\end{itemize}

We have $\lkK{v} \neq \emptyset$ since $K$ is connected. Thus, by Theorem \ref{splitting}, we get 
\begin{align*}
\polyX{K} \simeq &\Sigma \polyX{\lkK{v}} \vee \left(\polyX{\lkK{v}} * X_v \right) \vee \polyX{\dlK{v}}  .
\end{align*}
Since $\dlK{v}=\stK{w}$ is connected and $\card{V(\dlK{v})} < \card{V(K)} =n$, we get from the assumption of the induction that 
\begin{align*}
\polyX{\dlK{v}} &\simeq \bigvee_{\beta \in B} \left( \Sigma^{s_\beta} X_1^{*l_{1, \beta}} * \cdots * X_m^{*l_{m,\beta}} \right)
\end{align*}
for a family $\{ (s_\beta, l_{1, \beta}, \ldots, l_{m, \beta}) \}_{\beta \in B}$.

If $\lkK{v}$ is connected, then there exists $\{ (r_\alpha, k_{1, \alpha}, \ldots, k_{m, \alpha}) \}_{\alpha \in A}$ such that
\begin{align*}
\polyX{\lkK{v}} &\simeq \bigvee_{\alpha \in A} \left( \Sigma^{r_\alpha} X_1^{*k_{1, \alpha}} * \cdots * X_m^{*k_{m,\alpha}} \right).
\end{align*}
Therefore, we obtain
\begin{align*}
\Sigma \polyX{\lkK{v}} \simeq &\bigvee_{\alpha \in A} \left( \Sigma^{r_\alpha +1} X_1^{*k_{1, \alpha}} * \cdots * X_m^{*k_{m,\alpha}} \right) ,\\
\polyX{\lkK{v}} * X_v \simeq &\bigvee_{\alpha \in A} \left( \Sigma^{r_\alpha} X_1^{*k_{1, \alpha}} * \cdots * X_v^{*(k_{v,\alpha} +1)} * \cdots * X_m^{*k_{m,\alpha}} \right) .
\end{align*}

If $\lkK{v}$ is not connected, then by Proposition \ref{non-connected}, $\lkK{v} = \stK{u} \sqcup \langle \{u'\} \rangle$ for some $u, u' \in V(K)$. Since $\stK{u}$ is connected and $\card{V(\stK{u})} < \card{V(\lkK{v})}$ $< n$, it follows from Proposition \ref{union decomposition} that
\begin{align*}
\polyX{\lkK{v}} &\simeq \left( \bigvee_{\alpha \in A} \left( \Sigma^{r_\alpha} X_1^{*k_{1, \alpha}} * \cdots * X_m^{*k_{m,\alpha}} \right) \right)\sqcup X_{u'}
\end{align*}
for a family $\{ (r_\alpha, k_{1, \alpha}, \ldots, k_{m, \alpha}) \}_{\alpha \in A}$. Then, by Proposition \ref{disjoint suspension}, we get
\begin{align*}
\Sigma \polyX{\lkK{v}} \simeq &\left( \bigvee_{\alpha \in A} \left( \Sigma^{r_\alpha +1} X_1^{*k_{1, \alpha}} * \cdots * X_m^{*k_{m,\alpha}} \right) \right) \vee \Sigma X_{u'} \vee S^1, \\
\polyX{\lkK{v}} * X_v \simeq &\left( \bigvee_{\alpha \in A} \left( \Sigma^{r_\alpha} X_1^{*k_{1, \alpha}} * \cdots * X_v^{*(k_{v,\alpha} +1)} * \cdots * X_m^{*k_{m,\alpha}} \right) \right)\\
&\ \vee (X_v * X_{u'}) \vee \Sigma X_v.
\end{align*}
Here, remark that a sphere $S^r$ can be written as 
\begin{align*}
S^r = \Sigma^{r-1} X_1^{*0} * \cdots * X_m^{*0}.
\end{align*}

In both cases, $\polyX{K}$ is denoted by
\begin{align*}
\polyX{K} \simeq \bigvee_{\gamma \in \Gamma} \left( \Sigma^{t_\gamma} X_1^{*j_{1, \gamma}} * \cdots * X_m^{*j_{m,\gamma}} \right).
\end{align*}
for some family $\{ (t_\gamma, j_{1, \gamma}, \ldots, j_{m, \gamma}) \}_{\gamma \in \Gamma}$ of tuples of non-negative integers, as desired.
\end{proof}

\begin{remark}
The above proof indicates that $r_{\alpha} + k_{1, \alpha} + \cdots + k_{m, \alpha} \geq 2$ for any $\alpha \in A$ if $m \geq 2$.
So, the wedge sum in Theorem \ref{s-vertex-decomposable splitting} is well-defined up to homotopy since each wedge summand is connected.
\end{remark}

\begin{proof}[Proof of Theorem \ref{LnX homotopy}]
We have
\begin{align*}
&I(L_1) = \langle \{1\} \rangle, & &I(L_2)=\langle \{1\}, \{2\} \rangle, & &I(L_3) = \langle \{1, 3\}, \{2\} \rangle.
\end{align*}
So, we get
\begin{align*}
&\polyX{I(L_1)} = X_1  , & &\polyX{I(L_2)} = X_1 \sqcup X_2, & &\polyX{I(L_3)} = (X_1*X_3) \sqcup X_2 .
\end{align*}
By setting $X_1=X_2=X_3=X$, we get the desired equality for $n=1,2,3$.

Let $n \geq 1$. 
The proof of Theorem \ref{forest s-v-d} indicates that $n+2$ is an s-shedding vertex of $I(L_{n+3})$. Furthermore, we have
\begin{align*}
\lk{I(L_{n+3})}{n+2} &= I(L_{n+3} \setminus \neib{L_{n+3}}{n+2} )= I(L_n) \neq \emptyset, \\
\dl{I(L_{n+3})}{n+2} &= I(L_{n+3} \setminus \{n+2\}) \\
&= I(L_{n+1} \sqcup \{n+3\}) = I(L_{n+1}) * \langle \{n+3\} \rangle.
\end{align*}
So, by Theorem \ref{splitting}, we obtain
\begin{align}
\label{recursive}
\polyIX{n+3} \simeq &\Sigma \polyIX{n} \vee (\polyIX{n} * X) \vee (\polyIX{n+1} * X) .
\end{align}

Define $Y_n$ for $n \geq 1$ by 
\begin{align*}
Y_n = \bigvee_{k,r \in \mathbb{N}_{\geq 0}} \left( {\bigvee}_{N_n(k,r)} \Sigma^r X^{*k} \right) ,
\end{align*}
where
\begin{align*}
N_n(k,r) &= \binom{k+1}{n-2k-3r+1} \binom{k+r}{r}.
\end{align*}
We note that $N_n (k,r) >0$ for non-negative integers $k, r$ if and only if $0 \leq n-2k-3r+1 \leq k+1$ and $0 \leq r \leq k+r$. This inequality implies that
\begin{align*}
\max \left\{\frac{n-3r}{3}, 0 \right\} \leq  &k \leq \frac{n-3r+1}{2}, \\
0 \leq &r \leq \frac{n+1}{3}.
\end{align*}So, it follows that
\begin{align*}
Y_n = \bigvee_{0 \leq r \leq \frac{n+1}{3}} \left( \bigvee_{\max \left\{ \frac{n-3r}{3}, 0 \right\} \leq k \leq \frac{n-3r+1}{2}} 
\left( {\bigvee}_{N_n(k,r)} \Sigma^r X^{*k} \right) \right) .
\end{align*}

In order to complete the proof, it is sufficient to show that $\polyIX{n} \simeq Y_n$ for any $n \geq 4$. First, the explicit descriptions of $Y_1$, $Y_2$, and $Y_3$ are obtained as follows.
\begin{align*}
Y_1 &= \bigvee_{0 \leq r \leq \frac{1+1}{3}} \left( \bigvee_{\max \left\{ \frac{1-3r}{3}, 0 \right\} \leq k \leq \frac{2-3r}{2}} 
\left( {\bigvee}_{N_1(k,r)} \Sigma^r X^{*k} \right) \right)  \\
 &=  \bigvee_{\frac{1}{3} \leq k \leq \frac{2}{2}} \left( {\bigvee}_{N_1(k,0)} X^{*k} \right)  \\
&={\bigvee}_{N_1(1,0)} X \\
&={\bigvee}_{\binom{2}{0} \binom{1}{0}} X \\
&=X,
\end{align*}
\begin{align*}
Y_2 &= \bigvee_{0 \leq r \leq \frac{2+1}{3}} \left( \bigvee_{\max \left\{ \frac{2-3r}{3}, 0 \right\} \leq k \leq \frac{3-3r}{2}} 
\left( {\bigvee}_{N_2(k,r)} \Sigma^r X^{*k} \right) \right)  \\
&= \left( \bigvee_{\frac{2}{3} \leq k \leq \frac{3}{2}} \left( {\bigvee}_{N_2(k,0)} X^{*k} \right) \right)
\vee \left( \bigvee_{0 \leq k \leq \frac{0}{2}} \left( {\bigvee}_{N_3(k,1)} \Sigma X^{*k} \right) \right)\\
&=\left( {\bigvee}_{N_2(1,0)} X  \right) \vee \left( {\bigvee}_{N_2(0,1)} S^0 \right)\\
&=\left( {\bigvee}_{\binom{2}{1} \binom{1}{0}} X  \right) \vee \left( {\bigvee}_{\binom{1}{0} \binom{1}{1}} S^0 \right)\\
&=X \vee X \vee S^0,
\end{align*}
\begin{align*}
Y_3 &= \bigvee_{0 \leq r \leq \frac{3+1}{3}} \left( \bigvee_{\max \left\{ \frac{3-3r}{3}, 0 \right\} \leq k \leq \frac{4-3r}{2}} 
\left( {\bigvee}_{N_3(k,r)} \Sigma^r X^{*k} \right) \right)  \\
&= \left( \bigvee_{\frac{3}{3} \leq k \leq \frac{4}{2}} \left( {\bigvee}_{N_3(k,0)} X^{*k} \right) \right)
\vee \left( \bigvee_{0 \leq k \leq \frac{1}{2}} \left( {\bigvee}_{N_3(k,1)} \Sigma X^{*k} \right) \right)\\
&=\left( {\bigvee}_{N_3(1,0)} X  \right) \vee \left( {\bigvee}_{N_3(2,0)} X^{*2}  \right) \vee \left( {\bigvee}_{N_3(0,1)} S^0 \right)\\
&=\left( {\bigvee}_{\binom{2}{2} \binom{1}{0}} X  \right) \vee \left( {\bigvee}_{\binom{3}{0} \binom{2}{0}} X^{*2}  \right) \vee \left( {\bigvee}_{\binom{1}{1} \binom{1}{1}} S^0 \right)\\
&=X \vee X^{*2} \vee S^0.
\end{align*}

Though $Y_2 \neq \polyIX{2}$ and $Y_3 \neq \polyIX{3}$, we obtain from Proposition \ref{disjoint suspension}, \ref{join wedge} that
\begin{align*}
\Sigma \polyIX{2} &= \Sigma (X \sqcup X) \simeq \Sigma X \vee \Sigma X \vee S^1 = \Sigma Y_2, \\
\polyIX{2} * X &= (X \sqcup X) *X \simeq (X *X) \vee (X *X) \vee \Sigma X = Y_2 *X, \\
\Sigma \polyIX{3} &= \Sigma (X^{*2} \sqcup X) \simeq  \Sigma X^{*2} \vee \Sigma X \vee S^1 = \Sigma Y_3, \\
\polyIX{3} * X &= (X^{*2} \sqcup X) *X \simeq (X^{*2} *X) \vee (X *X) \vee \Sigma X = Y_3 *X.
\end{align*}

Now assume that we have
\begin{align*}
\Sigma \polyIX{n} &\simeq \Sigma Y_n \simeq \bigvee_{k,r \in \mathbb{N}_{\geq 0}} \left( {\bigvee}_{\binom{k+1}{n-2k-3r+1} \binom{k+r}{r}} \Sigma^{r+1} X^{*k} \right), \\
\polyIX{n} *X &\simeq Y_n *X \simeq \bigvee_{k,r \in \mathbb{N}_{\geq 0}} \left( {\bigvee}_{\binom{k+1}{n-2k-3r+1} \binom{k+r}{r}} \Sigma^r X^{*(k+1)} \right), \\
\polyIX{n+1} *X &\simeq Y_{n+1} *X \simeq \bigvee_{k,r \in \mathbb{N}_{\geq 0}} \left( {\bigvee}_{\binom{k+1}{n-2k-3r+2} \binom{k+r}{r}} \Sigma^r X^{*(k+1)} \right)
\end{align*}
for some $n \geq 1$. Then, by (\ref{recursive}), we obtain
\begin{align*}
&\polyIX{n+3} \\
\simeq &\Sigma \polyIX{n} \vee (\polyIX{n} * X) \vee (\polyIX{n+1} * X)  \\
\simeq &\left(\bigvee_{k,r \in \mathbb{N}_{\geq 0}} \left( {\bigvee}_{\binom{k+1}{n-2k-3r+1} \binom{k+r}{r}} \Sigma^{r+1} X^{*k} \right) \right) \\*
&\ \vee \left(\bigvee_{k,r \in \mathbb{N}_{\geq 0}} \left( {\bigvee}_{\binom{k+1}{n-2k-3r+1} \binom{k+r}{r}} \Sigma^r X^{*(k+1)} \right) \right) \\*
&\ \vee \left(\bigvee_{k,r \in \mathbb{N}_{\geq 0}} \left( {\bigvee}_{\binom{k+1}{n-2k-3r+2} \binom{k+r}{r}} \Sigma^r X^{*(k+1)} \right) \right) \\
\simeq &\left(\bigvee_{k,r \in \mathbb{N}_{\geq 0}} \left( {\bigvee}_{\binom{k+1}{n-2k-3r+1} \binom{k+r}{r}} \Sigma^{r+1} X^{*k} \right) \right) \\*
&\ \vee \left(\bigvee_{k,r \in \mathbb{N}_{\geq 0}} \left( {\bigvee}_{\binom{k+1}{n-2k-3r+1} \binom{k+r}{r} + \binom{k+1}{n-2k-3r+2} \binom{k+r}{r}} \Sigma^r X^{*(k+1)} \right) \right) \\
= &\left(\bigvee_{k,r \in \mathbb{N}_{\geq 0}} \left( {\bigvee}_{\binom{k+1}{n-2k-3r+1} \binom{k+r}{r}} \Sigma^{r+1} X^{*k} \right) \right) \\*
&\ \vee \left(\bigvee_{k,r \in \mathbb{N}_{\geq 0}} \left( {\bigvee}_{\binom{k+2}{n-2k-3r+2} \binom{k+r}{r} } \Sigma^r X^{*(k+1)} \right) \right) \\
= &\left(\bigvee_{k \in \mathbb{N}_{\geq 0}, s=r+1 \in \mathbb{N} }\left( {\bigvee}_{\binom{k+1}{n-2k-3(s-1)+1} \binom{k+(s-1)}{s-1}} \Sigma^{s} X^{*k} \right) \right) \\*
&\ \vee \left(\bigvee_{l=k+1 \in \mathbb{N},r \in \mathbb{N}_{\geq 0}} \left( {\bigvee}_{\binom{(l-1)+2}{n-2(l-1)-3r+2} \binom{(l-1)+r}{r} } \Sigma^r X^{*l} \right) \right) \\
\simeq &\left(\bigvee_{k,r \in \mathbb{N}_{\geq 0}} \left( {\bigvee}_{\binom{k+1}{n-2k-3r+4} \binom{k+r-1}{r-1} + \binom{k+1}{n-2k-3r+4} \binom{k+r-1}{r} } \Sigma^{r} X^{*k} \right) \right) \\
= &\left(\bigvee_{k,r \in \mathbb{N}_{\geq 0}} \left( {\bigvee}_{\binom{k+1}{(n+3)-2k-3r+1} \binom{k+r}{r}} \Sigma^{r} X^{*k} \right) \right) \\
=& Y_{n+3} .
\end{align*}
Hence, by induction on $n$, we conclude that we have $\polyIX{n} \simeq Y_n$ for any $n \geq 4$, as desired.
\end{proof}

\section{The Independence Complex of the Lexicographic Product over a Forest}
The lexicographic product $\lex{G}{H}$ of $H$ over $G$, which is called {\it composition} by Harary \cite{Harary69}, is defined as follows.
\begin{definition}
Let $G, H$ be graphs. The {\it lexicographic product} $\lex{G}{H}$ of $H$ over $G$ is a graph defined by
\begin{align*}
&V(\lex{G}{H}) = V(G) \times V(H) ,\\
&E(\lex{G}{H}) = \left\{ (u_1, v_1)(u_2, v_2) \ \middle| \ 
\begin{aligned}
&u_1 u_2 \in E(G) \\
&\text{ or} \\
&u_1=u_2, v_1 v_2 \in E(H) 
\end{aligned}
\right\}.
\end{align*}
\end{definition}

The independence complex of $\lex{G}{H}$ and the polyhedral join are related in the following way.
\begin{proposition}
\label{lexicographic product polyhedral join}
Let $G, H$ be graphs. Then we have
\begin{align*}
I(\lex{G}{H}) = \mathcal{Z}^*_{I(G)} ( I(H), \emptyset ) .
\end{align*}
\end{proposition}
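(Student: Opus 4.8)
The plan is to prove this as an equality of simplicial complexes by comparing simplices directly. Both $I(\lex{G}{H})$ and $\mathcal{Z}^*_{I(G)}(I(H), \emptyset)$ are complexes on the common vertex set $V(G) \times V(H)$ (for the polyhedral join, the $u$-th copy of $I(H)$ sits on $\{u\} \times V(H)$), so it suffices to show that a subset $\tau \subset V(G) \times V(H)$ is a simplex of one exactly when it is a simplex of the other. The argument is purely combinatorial, an unwinding of the two definitions; no topology enters. To describe an arbitrary $\tau$ I would introduce its projection $\sigma_\tau = \{ u \in V(G) \ |\ (u,v) \in \tau \text{ for some } v \in V(H)\}$ and, for each $u \in V(G)$, its fiber $\tau_u = \{ v \in V(H) \ |\ (u,v) \in \tau \}$.

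First I would characterize $I(\lex{G}{H})$. Unravelling the definition of $E(\lex{G}{H})$, two distinct vertices $(u_1, v_1), (u_2, v_2) \in \tau$ are non-adjacent precisely when $u_1 u_2 \notin E(G)$ and, in the case $u_1 = u_2$, when $v_1 v_2 \notin E(H)$. Splitting the pairs of elements of $\tau$ according to whether their $G$-coordinates agree, this shows that $\tau \in I(\lex{G}{H})$ if and only if $\sigma_\tau \in I(G)$ (no two distinct $G$-coordinates occurring in $\tau$ are adjacent in $G$) and $\tau_u \in I(H)$ for every $u$ (each fiber is independent in $H$). Next I would characterize the polyhedral join: by definition $\tau$ lies in $\mathcal{Z}^*_{I(G)}(I(H), \emptyset)$ iff $\tau \in (I(H), \emptyset)^{*\sigma}$ for some $\sigma \in I(G)$. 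Writing $(I(H),\emptyset)^{*\sigma}$ through the join formula (\ref{simplicial join}), with the copy of $I(H)$ placed on $\{u\} \times V(H)$ for $u \in \sigma$ and the empty entry (which contributes no vertices and drops out of the join) on the remaining factors, one sees that $\tau \in (I(H), \emptyset)^{*\sigma}$ exactly when $\sigma_\tau \subset \sigma$ and $\tau_u \in I(H)$ for all $u$. Hence $\tau$ belongs to the polyhedral join iff there exists $\sigma \in I(G)$ with $\sigma_\tau \subset \sigma$ and every fiber independent.

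Finally I would combine the two descriptions. Since $I(G)$ is closed under taking subsets, a simplex $\sigma \in I(G)$ with $\sigma_\tau \subset \sigma$ exists if and only if $\sigma_\tau$ itself lies in $I(G)$ (take $\sigma = \sigma_\tau$; the converse is downward closure). This collapses the polyhedral-join condition to ``$\sigma_\tau \in I(G)$ and $\tau_u \in I(H)$ for every $u$'', which is exactly the condition characterizing $I(\lex{G}{H})$; the two complexes therefore have the same simplices and coincide. I expect the only genuinely delicate point to be the bookkeeping around the empty entry $A_u = \emptyset$: one must verify that the factors with $u \notin \sigma$ contribute no vertices, so that the join is effectively taken over $\sigma$ alone, and handle the boundary case $\sigma = \emptyset$. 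The downward-closure reduction is the other step that deserves an explicit sentence, but it is immediate once stated.
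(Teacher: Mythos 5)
Your argument is correct and complete. Note, however, that the paper itself does not prove this proposition directly: its ``proof'' is a one-line citation to an earlier paper of the author (\cite[Example 25, Proposition 27]{Okura22}), so your direct combinatorial verification is genuinely different in character --- it makes the statement self-contained rather than deferring it. Your two characterizations are exactly right: $\tau \in I(\lex{G}{H})$ iff the projection $\sigma_\tau$ is independent in $G$ and every fiber $\tau_u$ is independent in $H$ (the case split on whether the $G$-coordinates agree uses that $G$ has no loops, so the ``or'' in the edge definition cleanly separates); and membership in $(I(H),\emptyset)^{*\sigma}$ amounts to $\sigma_\tau \subset \sigma$ together with independence of the fibers, after which downward closure of $I(G)$ collapses the existential over $\sigma$ to the single condition $\sigma_\tau \in I(G)$. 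You correctly flag the one convention issue: in the simplicial join formula the entry $\emptyset$ must be read as the join identity $\{\emptyset\}$ (contributing no vertices), matching the topological convention $X * \emptyset = X$; with the literal void complex the formula (\ref{simplicial join}) would degenerate. Handling that point and the case $\sigma = \emptyset$ explicitly, as you propose, is all that is needed, and the resulting proof is a perfectly good substitute for the external citation.
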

\begin{proof}
Follows from \cite[Example 25, Proposition 27]{Okura22}.
\end{proof}

\begin{proposition}
\label{simplicial topological relation}
Let $M$ be a simplicial complex on $[m]$ and $\pair = \{(K_i, L_i)\}_{i \in [m]}$ be a family of pairs of simplicial complexes and their subcomplexes.
For a family $\underline{X} = \{ X_v \}_{v \in \bigsqcup_{i \in [m]} V(K_i)}$ of finite CW complexes, we have
\begin{align*}
\polyX{\poly} \cong \mathcal{Z}^*_M( \polyX{\underline{K}}, \polyX{\underline{L}}) ,
\end{align*}
where $(\polyX{\underline{K}}, \polyX{\underline{L}} ) = \{(\polyX{K_i} , \polyX{L_i})\}_{i \in [m]}$.
\end{proposition}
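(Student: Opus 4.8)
The plan is to realize both sides of the claimed homeomorphism as subcomplexes of a single ambient space and then compare them cell by cell. Set $W = \bigsqcup_{i \in [m]} V(K_i)$ and let $\Phi$ denote the join of all the complexes $X_v$, $v \in W$. Recall from the CW structure on a join that a cell of $\Phi$ is specified by a non-empty subset $T \subseteq W$ together with a choice of cell of $X_v$ for each $v \in T$; I will call $T$ the \emph{support} of the cell and write $T_i = T \cap V(K_i)$. For a subset $U \subseteq W$, the join of the $X_v$ with $v \in U$ is exactly the subcomplex of $\Phi$ consisting of those cells whose support is contained in $U$. Hence, by Proposition \ref{join property}, for any simplicial complex $P$ on a subset of $W$ the polyhedral join $\mathcal{Z}^*_P(\underline{X})$ is identified with the subcomplex of $\Phi$ consisting of all cells whose support is a simplex of $P$ (using that $P$ is closed under passing to subsets). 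Both $\polyX{K_1} * \cdots * \polyX{K_m}$ and $\mathcal{Z}^*_{\poly}(\underline{X})$ thus embed canonically as subcomplexes of $\Phi$, and the whole statement reduces to comparing supports.

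Next I would pin down the simplices of the simplicial complex $N := \poly$. Writing a candidate simplex of $K_1 * \cdots * K_m$ as $\tau = \bigsqcup_i \tau_i$ with $\tau_i = \tau \cap V(K_i)$, unwinding the definition of the polyhedral join of simplicial complexes shows that $\tau \in (\underline{K}, \underline{L})^{*\sigma}$ for $\sigma \in M$ iff $\tau_i \in K_i$ for $i \in \sigma$ and $\tau_i \in L_i$ for $i \notin \sigma$. Taking the union over $\sigma \in M$, and using that each $L_i$ is a subcomplex of $K_i$ together with the downward-closedness of $M$ (so that one may take $\sigma = \{\, i : \tau_i \notin L_i \,\}$), I expect to obtain the clean characterization
\[
\tau \in N \iff \tau_i \in K_i \text{ for all } i \text{ and } \{\, i \in [m] : \tau_i \notin L_i \,\} \in M .
\]
Combined with the previous paragraph, the left-hand side $\polyX{\poly} = \mathcal{Z}^*_N(\underline{X})$ is then exactly the set of cells of $\Phi$ whose support $T$ satisfies $T_i \in K_i$ for all $i$ and $\{\, i : T_i \notin L_i \,\} \in M$.

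For the right-hand side I would expand $\mathcal{Z}^*_M(\polyX{\underline{K}}, \polyX{\underline{L}})$ directly from the definition: a cell of $\Phi$ lies in the summand $(\polyX{\underline{K}}, \polyX{\underline{L}})^{*\sigma}$ precisely when, for each $i$, its partial support $T_i$ is the support of a cell of $\polyX{K_i}$ (for $i \in \sigma$) or of $\polyX{L_i}$ (for $i \notin \sigma$); by the first paragraph this means $T_i \in K_i$ for $i \in \sigma$ and $T_i \in L_i$ for $i \notin \sigma$. Ranging over $\sigma \in M$ and again invoking $L_i \subseteq K_i$ and the downward-closedness of $M$, a cell lies in the right-hand side iff $T_i \in K_i$ for all $i$ and $\{\, i : T_i \notin L_i \,\} \in M$. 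This is verbatim the condition found for the left-hand side, so the two subcomplexes of $\Phi$ consist of the same cells, coincide, and the asserted homeomorphism follows.

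The routine but genuinely delicate point — the step I expect to be the main obstacle — is the bookkeeping underlying the first paragraph: making the notion of support of a cell of an iterated join precise, and checking that the canonical associativity homeomorphisms for $*$ from Proposition \ref{join property} identify $\polyX{K_1} * \cdots * \polyX{K_m}$ and $\mathcal{Z}^*_N(\underline{X})$ with subcomplexes of the \emph{same} ambient complex $\Phi$ in a compatible way, so that the cell-by-cell comparison is legitimate. Once this identification is in place, the combinatorial heart of the argument is just the two symmetric support computations above, and the possibility that some $X_v$ is empty causes no difficulty, since such a $v$ never appears in the support of any cell of $\Phi$.
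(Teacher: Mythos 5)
Your argument is correct, but it takes a different route from the paper's. The paper's proof is a formal computation with unions: it rewrites $\polyX{\poly}$ as $\bigcup_{S \in M} \polyX{\pair^{*S}}$, applies Proposition \ref{join decomposition} to each piece $\pair^{*S} = J_1 * \cdots * J_m$ to replace it by $\polyX{J_1} * \cdots * \polyX{J_m}$, and reassembles the union into $\mathcal{Z}^*_M(\polyX{\underline{K}}, \polyX{\underline{L}})$. You instead embed both sides into the total join of all the $X_v$ and compare them cell by cell through the support function. The two routes rest on the same two ingredients (the decomposition of $\poly$ over $S \in M$, and associativity of the join as in Propositions \ref{join property} and \ref{join decomposition}), but yours makes explicit a point the paper passes over in silence: the homeomorphisms $\polyX{J_1 * \cdots * J_m} \cong \polyX{J_1} * \cdots * \polyX{J_m}$ for the various $S \in M$ must all be restrictions of one ambient identification for their union to be well defined, and your support bookkeeping is exactly the verification of that compatibility. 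The paper's version is shorter because it uses Proposition \ref{join decomposition} as a black box; yours is more self-contained and more transparent about where the content lies. One small caveat, which applies equally to the paper: your two support conditions coincide only because a vanishing partial support $T_i = \emptyset$ is counted as belonging to $L_i$, i.e., you implicitly assume each $L_i$ contains the empty simplex rather than being the void complex (if some $L_i$ is void, the left-hand side becomes empty while the right-hand side need not, so the statement itself requires this convention). This is the same convention the paper needs for Proposition \ref{join decomposition}, so it is not a defect of your argument specifically, but it deserves a sentence.
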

\begin{proof}
Using Proposition \ref{join decomposition}, we obtain
\begin{align*}
&\polyX{\poly} \\ 
= &\bigcup_{\sigma \in \poly} (\underline{X}, \emptyset)^{* \sigma} \\
= &\bigcup_{S \in M} \left( \bigcup_{\sigma \in (\underline{K}, \underline{L})^{*S} } (\underline{X}, \emptyset)^{* \sigma} \right)\\
= &\bigcup_{S \in M} \polyX{\pair^{*S}} \\
= &\bigcup_{S \in M}  \polyX{J_1 * \cdots * J_m}  & &\left(J_i = \left\{
\begin{aligned}
& K_i & &(i \in S) \\
& L_i & &(i \notin S)
\end{aligned} \right. \right)\\
\cong &\bigcup_{S \in M} \left( \polyX{J_1} * \cdots * \polyX{J_m} \right) \\
=& \bigcup_{S \in M} (\polyX{\underline{K}}, \polyX{\underline{L}})^{* S} \\
=& \mathcal{Z}^*_M( \polyX{\underline{K}}, \polyX{\underline{L}}).
\end{align*}
\end{proof}

\begin{theorem}
\label{realization}
Let $M$ be a simplicial complex on $[m]$ and $\pair = \{(K_i, L_i)\}_{i \in [m]}$ be a family of pairs of simplicial complexes and their subcomplexes.
Then we have
\begin{align*}
\left| \poly \right| \cong \mathcal{Z}^*_M( |\underline{K}|, |\underline{L}|) ,
\end{align*}
where $| \underline{K}| = \{|K_i|\}_{i \in [m]}$, $|\underline{L}| = \{|L_i|\}_{i \in [m]}$.
\end{theorem}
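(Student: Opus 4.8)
The plan is to deduce the statement immediately from Proposition \ref{simplicial topological relation} by specializing the family of CW complexes to one-point spaces. The heart of the argument is the observation that for \emph{any} simplicial complex $K$, choosing the constant family $\underline{X} = \mathrm{pt}$ recovers the geometric realization:
\begin{align*}
\mathcal{Z}^*_K(\mathrm{pt}, \emptyset) \cong |K|.
\end{align*}
This is pure unwinding of definitions. With $X_v = \mathrm{pt}$ at every vertex, the ambient join (one factor per vertex of $K$) is the full simplex on $V(K)$, and for each $\sigma \in K$ the piece $(\mathrm{pt}, \emptyset)^{*\sigma}$ is exactly the closed face spanned by $\sigma$, since joining with $\emptyset$ deletes a factor and the join of $\card{\sigma}$ points is $\Delta^{\card{\sigma}-1}$. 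Taking the union over all $\sigma \in K$ reassembles precisely $|K|$ inside that simplex.

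First I would record this point-case identity (as an inline observation or short lemma). Then I would apply Proposition \ref{simplicial topological relation} to $M$, the pair $\pair$, and the constant family $\underline{X} = \mathrm{pt}$, which is a legitimate choice of finite CW complexes. The left-hand side of that proposition becomes $\mathcal{Z}^*_{\poly}(\mathrm{pt}, \emptyset)$, which by the identity above is homeomorphic to $|\poly|$. On the right-hand side, each factor $\mathcal{Z}^*_{K_i}(\mathrm{pt}, \emptyset)$ becomes $|K_i|$ and each $\mathcal{Z}^*_{L_i}(\mathrm{pt}, \emptyset)$ becomes $|L_i|$, and because $L_i \subset K_i$ these realizations fit together as the natural inclusions $|L_i| \hookrightarrow |K_i|$; hence the right-hand side is $\mathcal{Z}^*_M(|\underline{K}|, |\underline{L}|)$. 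Chaining the two homeomorphisms gives
\begin{align*}
|\poly| \cong \mathcal{Z}^*_{\poly}(\mathrm{pt}, \emptyset) \cong \mathcal{Z}^*_M(|\underline{K}|, |\underline{L}|),
\end{align*}
which is exactly the assertion.

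I do not expect a genuine obstacle here: Proposition \ref{simplicial topological relation} already packages the commutation of $\mathcal{Z}^*_M$ with the simplicial-to-topological join (Proposition \ref{join decomposition}) into a single homeomorphism, so all the real work has been done upstream. The only points requiring care are the verification of the point-case identity $\mathcal{Z}^*_K(\mathrm{pt}, \emptyset) \cong |K|$ and the bookkeeping ensuring that the subcomplex identifications $|L_i| \subset |K_i|$ match the pair structure on the right-hand side, so that the two instances of $\mathcal{Z}^*_M$ genuinely coincide. Both are routine.
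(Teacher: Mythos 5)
Your proposal is correct and follows exactly the paper's own argument: the paper likewise records the identity $|K| = \mathcal{Z}^*_K(\underline{\mathrm{pt}})$ and then applies Proposition \ref{simplicial topological relation} with the constant one-point family. No differences worth noting.
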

\begin{proof}
Remark that for a simplicial complex $K$, we have
\begin{align*}
|K|=\mathcal{Z}^*_{K} ( \underline{\mathrm{pt}}) ,
\end{align*}
where $\underline{\mathrm{pt}}= \{X_v\}_{v \in V(K)}$, $X_v = \mathrm{pt}$ for any $v \in V(K)$.
So, this theorem follows from Proposition \ref{simplicial topological relation} by 
\begin{align*}
\left| \poly \right| = \mathcal{Z}^*_{\poly}(\underline{\mathrm{pt}}) \cong \mathcal{Z}^*_M \left( \mathcal{Z}^*_{\underline{K}} (\underline{\mathrm{pt}}), \mathcal{Z}^*_{\underline{L}}(\underline{\mathrm{pt}}) \right)= \mathcal{Z}^*_M( |\underline{K}|, |\underline{L}|).
\end{align*}
\end{proof}

The homeomorphism (\ref{lexico poly}) in Section \ref{introduction} is obtained from Proposition \ref{lexicographic product polyhedral join} and Theorem \ref{realization} by
\begin{align*}
|I(\lex{G}{H})| = |\mathcal{Z}^*_{I(G)} ( I(H), \emptyset ) | \cong \mathcal{Z}^*_{I(G)} ( |I(H)| ) .
\end{align*}
\noindent
Using this homeomorphism, we deduce Theorem \ref{forest wedge of spheres} and Theorem \ref{line theorem} from Theorem \ref{s-vertex-decomposable splitting} and Theorem \ref{LnX homotopy}, respectively.
\begin{proof}[Proof of Theorem \ref{forest wedge of spheres}]
Remark that $uv \in E(G)$ for any $v \in V(G) \setminus \{u\}$ means that $u$ is isolated in $I(G)$. Therefore, by Theorem \ref{forest s-v-d} and Proposition \ref{non-connected}, the assumption on $G$ implies that $I(G)$ is connected s-vertex-decomposable simplicial complex.
Thus, by Theorem \ref{s-vertex-decomposable splitting} and the homeomorphism (\ref{lexico poly}), we have
\begin{align*}
|I(\lex{G}{H})| &\cong \mathcal{Z}^*_{I(G)}( |I(H)|) \\
&\simeq \bigvee_{\alpha \in A} \left( \Sigma^{r_\alpha} |I(H)|^{*k_\alpha} \right)
\end{align*}
for some family $\{(r_{\alpha}, k_{\alpha})\}_{\alpha \in A}$ of pairs of non-negative integers.
If $|I(H)|$ is homotopy equivalent to a wedge sum of spheres, then so is $\Sigma^{r_\alpha} |I(H)|^{*k_\alpha}$. This is the desired conclusion.
\end{proof}

\begin{proof}[Proof of Theorem \ref{line theorem}]
By Theorem \ref{LnX homotopy} and the homeomorphism (\ref{lexico poly}), we obtain
\begin{align*}
|I(\lex{L_m}{H})| &\cong \mathcal{Z}^*_{I(L_m)} ( | I(H) | ) \\
&\simeq \left\{
\begin{aligned}
& |I(H)| & &(m=1), \\
&|I(H)| \sqcup |I(H)|& &(m=2) ,\\
&|I(H)|^{*2} \sqcup |I(H)| & &(m=3), \\
&\bigvee_{p, r \in \mathbb{N}_{\geq 0}} \left( {\bigvee}_{\binom{p+1}{m-2p-3r+1} \binom{p+r}{r}} \Sigma^r |I(H)|^{*p} \right) & &(m \geq 4).
\end{aligned} \right.
\end{align*}
Here, we have
\begin{align*}
|I(H)| &\simeq  {\bigvee}_{n} S^k ,\\
|I(H)| \sqcup |I(H)| & \simeq \left( {\bigvee}_n \sphere{k} \right) \sqcup \left( {\bigvee}_n \sphere{k} \right), \\
|I(H)|^{*2} \sqcup |I(H)| &\simeq \left( \left( {\bigvee}_n \sphere{k} \right) * \left( {\bigvee}_n \sphere{k} \right) \right) \sqcup \left( {\bigvee}_n \sphere{k} \right)  \\
&\simeq \left( {\bigvee}_{n^2} \sphere{2k+1} \right) \sqcup \left( {\bigvee}_n \sphere{k} \right) , \\
\Sigma^r |I(H)|^{*p} &\simeq \Sigma^r \left( {\bigvee}_{n} \sphere{k} \right)^{*p} \\
&\simeq  {\bigvee}_{n^p} S^{p(k+1)-1+r} .
\end{align*}
So, we get the desired formula for $m=1,2,3$. For $m \geq 4$, we get
\begin{align*}
&|I(\lex{L_m}{H})| \\
\simeq & \bigvee_{p,r \in \mathbb{N}_{\geq 0}} \left( {\bigvee}_{\binom{p+1}{m-2p-3r+1} \binom{p+r}{r}} \Sigma^r |I(H)|^{*p} \right) \\
\simeq & \bigvee_{p,r \in \mathbb{N}_{\geq 0}} \left( {\bigvee}_{\binom{p+1}{m-2p-3r+1} \binom{p+r}{r}} \left( {\bigvee}_{n^p} S^{p(k+1)-1+r} \right) \right) \\
\simeq & \bigvee_{p,r \in \mathbb{N}_{\geq 0}} \left( {\bigvee}_{n^p \binom{p+1}{m-2p-3r+1} \binom{p+r}{r}} S^{p(k+1)-1+r} \right) \\
\simeq & \bigvee_{p \in \mathbb{N}_{\geq 0}} \left(\bigvee_{d=p(k+1)-1+r \geq 0} \left( {\bigvee}_{n^p \binom{p+1}{m-2p-3(d-p(k+1)+1)+1} \binom{p+(d-p(k+1)+1)}{p}} S^d \right) \right) \\
\simeq &\bigvee_{d \geq 0} \left( {\bigvee}_{\sum_{p \geq 0} n^p \binom{p+1}{3(d-pk+1)-m} \binom{d-pk+1}{p} } \sphere{d} \right) 
\end{align*}
as desired.
\end{proof}

\begin{example}
Kozlov \cite[Proposition 5.2]{Kozlov99} proved that
\begin{align*}
|I(C_n)| &\simeq \left\{
\begin{aligned}
&\sphere{k - 1} \vee \sphere{k - 1}  & &(n =3k), \\
&\sphere{k-1} & &(n =3k+1), \\
&\sphere{k} & &(n =3k+2)  .
\end{aligned} \right. 
\end{align*}
So, we can determine the homotopy types of $|I(\lex{L_m}{C_n})|$ for any $m \geq 1$ and $n \geq 3$ by Theorem \ref{line theorem}.
\end{example}


\end{document}